\newtheorem*{example*}{Example}
\newtheorem*{remark*}{Remark}
\newtheorem*{theorem*}{Theorem}
\newtheorem*{definition*}{Definition}
\newtheorem{theorem}{Theorem}[section]
\newtheorem{rem}[theorem]{Remark}
\newenvironment{remark}{\begin{rem}\rm}{\end{rem}}
\newtheorem{proposition}[theorem]{Proposition}
\newtheorem{lemma}[theorem]{Lemma}
\newtheorem{corollary}[theorem]{Corollary}
\newtheorem{eg}[theorem]{Example}
\newenvironment{example}{\begin{eg}\rm}{\end{eg}}
\newtheorem{definition}[theorem]{Definition}
\newtheorem{conj}[theorem]{Conjecture}
\newtheorem{prob}[theorem]{Problem}
\newcommand{\R}{\mathbb{R}}
\newcommand{\C}{\mathbb{C}}
\newcommand{\Z}{\mathbb{Z}}
\newcommand{\W}{\mathcal{W}}
\newcommand{\balpha}{\boldsymbol{\alpha}}
\DeclareMathOperator{\Hom}{Hom}
\DeclareMathOperator{\supp}{Supp}
\DeclareMathOperator{\Lie}{Lie}
\DeclareMathOperator{\End}{End}
\DeclareMathOperator{\Diff}{Diff}
\DeclareMathOperator{\Ch}{Ch}
\DeclareMathOperator{\Str}{Str}
\DeclareMathOperator{\Id}{Id}
\DeclareMathOperator{\im}{im}
\DeclareMathOperator{\Td}{Td}
\DeclareMathOperator{\Tr}{Tr}
\DeclareMathOperator{\Th}{Th}
\DeclareMathOperator{\ind}{index}
\DeclareMathOperator{\rank}{rank}
\DeclareMathOperator{\vol}{vol}
\DeclareMathOperator{\hol}{hol-ind}
\DeclareMathOperator{\inl2}{ind}
\DeclareMathOperator{\ad}{ad}
\begin{document}

\title{An equivariant index formula for almost-CR manifolds}
\author{Sean Fitzpatrick\\
University of Toronto}
\maketitle
\abstract{We consider a consider the case of a compact manifold $M$, together with the following data: the action of a compact Lie group $H$ and a smooth $H$-invariant distribution $E$, such that the $H$-orbits are transverse to $E$.  These data determine a natural equivariant differential form with generalized coefficients $\mathcal{J}(E,X)$ whose properties we describe.

When $E$ is equipped with a complex structure, we define a class of symbol mappings $\sigma$ in terms of the  resulting almost-CR structure that are $H$-transversally elliptic whenever the action of $H$ is transverse to $E$.  We determine a formula for the $H$-equivariant index of such symbols that involves only $\mathcal{J}(E,X)$ and standard equivariant characteristic classes. This formula generalizes the formula given in \cite{F} for the case of a contact manifold.}
\section{Introduction}
Let $M$ be compact manifold equipped with an action $\Phi:H\rightarrow 
\Diff(M)$ of a compact Lie group $H$, and suppose we are given a smooth, $H$-invariant distribution $E\subset TM$ whose anihilator $E^0\subset T^*M$ satisfies the following conditions:
\begin{itemize}
\item[(i)] $E^0$ is oriented

\item[(ii)] $E^0\cap T^*_HM = 0$,
\end{itemize}
where $T^*_HM\subset T^*M$ denotes the space of covectors orthogonal to the $H$-orbits.
Property (i) is the statement that $E$ is co-oriented.  When the pair $(E,\Phi)$ satisfies property (ii), we say that the action of $H$ is {\em transverse} to $E$.

When a pair $(E,\Phi)$ satisfying properties (i) and (ii) exists, it is 
possible to define a natural equivariant differential form with 
generalized coefficients $\mathcal{J}(E,X)$ that depends only on the 
distribution $E$ and the action $\Phi$, as follows:

Denote by $\theta$ the canonical 1-form on $T^*M$, and let $\imath:E^0\rightarrow T^*M$ and $q:E^0\rightarrow M$ denote inclusion and projection, respectively. We denote by $D\theta(X) = d\theta -\iota(X)\theta$ the equivariant differential of $\theta$.  We then define
\begin{equation}\label{jintro}
\mathcal{J}(E,X) = (2\pi i)^k q_*\imath^*e^{iD\theta(X)}\quad \mbox{for any} \ X\in\mathfrak{h},
\end{equation}
where $k=\rank E^0$ and $q_*$ denotes integration over the fibres of $E^0$.  The assumption that the action of $H$ on $M$ is transverse to $E$ implies that this fibre integral is well-defined as an oscillatory integral in the sense of H\"ormander \cite{Hor}, and determines an equivariant differential form with generalized coefficients on $M$.

This form is an extension to distributions of higher corank of the form $\alpha\wedge\delta_0(D\alpha(X))$ defined in \cite{F} for the case of a contact distribution $E=\ker \alpha$.  If one carries out the fibre integral in (\ref{jintro}) locally, in terms of some frame $\balpha = (\alpha_1,\ldots, \alpha_k)$ for $E^0$, one obtains the expression
\begin{equation}
\mathcal{J}(E,X) = \alpha_k\wedge\cdots\wedge\alpha_1\delta_0(D\balpha(X))\label{jloc},
\end{equation}
where $\delta_0$ denotes the Dirac delta function on $\R^k$.  Using the properties of $\delta_0$, one can show directly that the expression (\ref{jloc}) is independent of the choice of frame $\balpha$, and that $D\mathcal{J}(E,X) = 0$.
Let $\pi$, $r$ and $s$ be the projections given by the following diagram:
\[
\xymatrix{T^*M\ar[r]_{r}\ar[d]_\pi &E^* \ar[ld]^{s}\\
M &}
\]
Let $\W = \W^+\oplus \W^-\rightarrow M$ be an $H$-equivariant $\Z_2$-graded vector bundle, and suppose we have a symbol
\begin{equation*}
\sigma:\pi^*\W^+\rightarrow \pi^*\W^-,
\end{equation*}
on $T^*M$ such that $\sigma = r^*\sigma_E$, for some symbol $\sigma_E:s^*\W^+\rightarrow s^*\W^-$ defined on $E^*$.  If $\sigma_E$ is elliptic, then $\supp(\sigma) = E^0$, and the assumption that the action of $H$ is transverse to $E$ implies that $\supp(\sigma)\cap T^*_HM = 0$, whence $\sigma$ satisfies Atiyah's definition of an $H$-transversally elliptic symbol \cite{AT}.

\begin{example*}\rm
If $M$ is a Cauchy-Riemann (CR) manifold, and $E$ is the real distribution underlying the CR distribution $E_{1,0}\subset TM\otimes\C$, then one such $\sigma$ is the principal symbol of $\overline{\partial}_b + \overline{\partial}_b^*$, where $\overline{\partial}_b$ is the tangential CR operator.  A related example is the symbol of the differential operator constructed in \cite{F} for contact manifolds. In either of these cases, it is necessary that the $H$-action be transverse to $E$ in order to define the equivariant index of the symbol.  An example of such an action occurs in the case of a Sasakian manifold $(M,E,g)$: the action of the group of isometries of $(M,g)$ is always transverse to $E$, since the Reeb field is Killing \cite{Blair}.
\end{example*}
\begin{example*}\rm
Given a principal $H$-bundle $\pi:M\rightarrow B$, we let $E\cong \pi^*TB$ be the horizontal distribution with respect to some choice of connection.  Then, if $\sigma_B$ is an elliptic symbol on $T^*B$, its pullback to $T^*M$ will be transversally elliptic.  This example was studied in detail by Berline and Vergne \cite{BV2}, and extended by Vergne to the case of a locally free action \cite{V1}.  The formula we give below can be thought of as a further extension of these results to a broader class of group actions.
\end{example*}

Since the symbols we consider are $H$-transversally elliptic, their $H$-equivariant index is defined as a generalized function on $H$ \cite{AT}.  Cohomological formulas for this index have been given by Berline and Vergne \cite{BV1,BV2}, and recently, by Paradan and Vergne \cite{PV3}.  Both formulas involve the integral over $T^*M$ of certain characteristic equivariant forms.  The earlier formula has the difficuly of requiring the integration of forms without compact support, while the newer formula requires a choice of cutoff function.  By integrating over the fibres of $T^*M$, we obtain a formula for the index as an integral over $M$ which is free of both choices and growth conditions.

We then specialize to the case of an $H$-invariant almost-CR structure $E\otimes \C = E_{1,0}\oplus E_{0,1}\subset TM\otimes \C$.  If we let $\W = \bigwedge E^{0,1}$, then we can construct an $H$-transversally elliptic symbol $\sigma:\pi^*\W^+\rightarrow \pi^*\W^-$ that depends only on $E^*$.  The integral over the fibres of $T^*M$ of the corresponding Chern character can be computed explicitly in terms of equivariant characteristic classes on $M$, giving:
\begin{theorem*}
Let $\mathcal{V}\rightarrow M$ be an $H$-equivariant Hermitian vector bundle on $M$,  and let $\sigma_{\mathcal{V}}$ denote the symbol $\sigma\otimes\Id_{\mathcal{V}}$ on $\pi^*\W\otimes\mathcal{V}$.
Then the $H$-equivariant index of $\sigma_{\mathcal{V}}$ is the generalized function on $H$ whose germ at $1\in H$ is given, for $X\in\mathfrak{h}$ sufficiently small, by
\begin{equation}\label{compint}
\ind^H(\sigma_{\mathcal{V}})(e^X)  =
\int_{M} (2\pi i)^{-\rank E/2}\Td(E,X)\hat{A}^2(E^0,X)\mathcal{J}(E,X)\Ch(\mathcal{V})(X), 
\end{equation}
with similar formulas near other elements $h\in H$.
\end{theorem*}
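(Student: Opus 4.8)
The plan is to start from the cohomological formula for the $H$-equivariant index of a transversally elliptic symbol due to Berline--Vergne \cite{BV1,BV2} and Paradan--Vergne \cite{PV3}, which represents $\ind^H(\sigma_{\mathcal{V}})(e^X)$, as a germ of a generalized function near $1\in H$, by an integral over $T^*M$ of the equivariant Todd class of $TM\otimes\C$ against the equivariant Chern character of the symbol. Writing this schematically as $\ind^H(\sigma_{\mathcal{V}})(e^X)=(2\pi i)^{-\dim M}\int_{T^*M}\Td(TM\otimes\C,X)\,\Ch(\sigma_{\mathcal{V}})(X)$, where the Chern character is represented so as to carry the oscillatory factor $e^{iD\theta(X)}$ that renders the integral convergent in the directions transverse to the $H$-orbits, my whole strategy is to carry out the fibre integration of this integrand over $\pi:T^*M\to M$ in two stages, using the product structure $\sigma=r^*\sigma_E$ to decouple them.

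First I would fix an $H$-invariant metric, which furnishes an $H$-equivariant orthogonal splitting $T^*M=E^*\oplus E^0$ realizing $r$ and $\imath$ as the projection onto and inclusion of the summands, together with the corresponding splitting $TM\otimes\C=(E\otimes\C)\oplus(E^0\otimes\C)$. The Todd class then factors as $\Td(TM\otimes\C,X)=\Td(E\otimes\C,X)\,\Td(E^0\otimes\C,X)$, and using the elementary identity $\Td(W\otimes\C)=\hat{A}(W)^2$ valid for any real bundle $W$, the second factor is exactly $\hat{A}^2(E^0,X)$, while the first is $\Td(E_{1,0},X)\,\Td(E_{0,1},X)$ for the almost-CR decomposition $E\otimes\C=E_{1,0}\oplus E_{0,1}$. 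Correspondingly the fibre integral factors as $\pi_*=s_*\circ r_*$, integration over the $E^0$-directions followed by integration over the $E^*$-directions; the co-orientation of $E^0$ guaranteed by property (i) is what orients the former.

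Since $\sigma=r^*\sigma_E$, its Chern character $\Ch(\sigma_{\mathcal{V}})=r^*\Ch(\sigma_E)\cdot\pi^*\Ch(\mathcal{V})$ is pulled back along $r$ and is therefore constant along the fibres of $r$; the only part of the integrand depending on the $E^0$-coordinates is the oscillatory factor, whose restriction to $E^0$ is $\imath^*e^{iD\theta(X)}$. Its fibre integral is, by the very definition (\ref{jintro}) of $\mathcal{J}(E,X)$, equal to $(2\pi i)^{-k}\mathcal{J}(E,X)$ with $k=\rank E^0$; this is the step at which the transversality hypothesis $E^0\cap T^*_HM=0$ is invoked to make the integral converge as an oscillatory integral. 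What remains to be integrated over the $E^*$-directions is $\Td(E_{1,0},X)\Td(E_{0,1},X)\,\Ch(\sigma_E)(X)$. The symbol $\sigma_E$ built from $\W=\bigwedge E^{0,1}$ is the Koszul, or Bott, symbol of the complex bundle $E_{0,1}$, so its Chern character satisfies the Riemann--Roch identity $s_*\Ch(\sigma_E)(X)=\Td(E_{0,1},X)^{-1}$; by the projection formula the $s_*$ integral collapses this product to $\Td(E_{1,0},X)$, which is what $\Td(E,X)$ denotes, while $\Ch(\mathcal{V})(X)$ is carried untouched through both fibre integrations.

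Assembling the surviving factors $\Td(E,X)$, $\hat{A}^2(E^0,X)$, $\mathcal{J}(E,X)$ and $\Ch(\mathcal{V})(X)$, and collecting the powers of $2\pi i$ coming from the normalization of the index formula, from the Bott pushforward over the complex bundle $E_{1,0}$ of complex rank $\rank E/2$, and from the $(2\pi i)^k$ built into $\mathcal{J}$, produces the prefactor $(2\pi i)^{-\rank E/2}$ and hence the formula (\ref{compint}); near a general $h\in H$ one runs the same computation after localizing the index to the fixed-point set $M^h$ with the almost-CR data restricted. I expect the genuine obstacle to lie in making the third paragraph rigorous: justifying a Fubini theorem that separates the absolutely convergent Riemann--Roch integral over $E^*$ from the merely oscillatory integral over $E^0$ for equivariant forms with generalized coefficients, and then matching the Bott-class normalization of $\Ch(\sigma_E)$ against the ambient Todd factor so that the pair $\Td(E_{1,0})\Td(E_{0,1})$ reduces to a single $\Td(E,X)$ with precisely the right power of $2\pi i$. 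It is the bookkeeping of these normalizations and the convergence of the separated integral, rather than the underlying topology, that I anticipate being delicate.
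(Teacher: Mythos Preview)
Your overall architecture is the same as the paper's: start from a cohomological index formula on $T^*M$, factor through the splitting $T^*M=E^*\oplus E^0$, integrate over $E^0$ to produce $\mathcal{J}(E,X)$, and over $E^*$ to turn the Chern character of the Bott symbol into a Todd class via the Mathai--Quillen/Riemann--Roch identity. The difference is in how the $E^0$-integration is handled. You work with the Berline--Vergne oscillatory factor $e^{iD\theta(X)}$ directly and appeal to Fubini; the paper instead uses the Paradan--Vergne formula with the compactly supported form $P_\theta(X)$, which sidesteps all convergence issues. The price the paper pays is two technical lemmas: Theorem~\ref{l2}, showing that $q_*\imath^*P_\theta(X)=(2\pi i)^k\mathcal{J}(E,X)$ (i.e.\ that integrating $P_\theta$ or $e^{iD\theta}$ over the fibres of $E^0$ gives the same thing), and Proposition~\ref{propa}, a transgression argument showing that $\Ch(\sigma)P_\theta$ and $\Ch(\sigma)P_{p^*\imath^*\theta}$ are cohomologous because $\theta$ and $p^*\imath^*\theta$ agree on $\supp\sigma=E^0$.

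This last point is where your sketch is a bit too quick. The canonical $1$-form $\theta$ does \emph{not} split as a sum of forms pulled back from $E^*$ and from $E^0$; $d\theta$ has cross terms, so $e^{iD\theta(X)}$ does not factor as (something on $E^*$)$\times\imath^*e^{iD\theta(X)}$, and your claim that ``the only part of the integrand depending on the $E^0$-coordinates is the oscillatory factor, whose restriction to $E^0$ is $\imath^*e^{iD\theta(X)}$'' is not literally true. The paper resolves this by replacing $\theta$ with $p^*\imath^*\theta$, which \emph{does} factor, and invoking Proposition~\ref{propa} to justify the replacement. So the obstacle you anticipate is real, but it is not merely a Fubini/convergence issue: one must first deform the $1$-form to something that respects the splitting, and only then does the iterated fibre integral make sense. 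Once that is in place, the rest of your computation (including the $\hat{A}^2(E^0)$ and $\Td(E)$ bookkeeping) matches the paper's Section~\ref{comp} line for line.
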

This formula is similar to the one obtained in \cite{F} for contact manifolds, with the exception of the term $\hat{A}^2(E^0,X)$, whose appearance in this formula reflects the fact that $E^0$ need not be trivial in general.

As an application, suppose we are given a complex homogeneous space $H/K$, and a unitary $K$-representation $\tau:K\rightarrow \End(V_\tau)$. The associated vector bundle $\mathcal{V}_\tau = H\times_K V_\tau$ is holomorphic, and one may define two different induced representations of $H$: the Frobenius induced representation $\inl2^H_K(\tau)$ on the space of $L^2$-sections of $\mathcal{V}_\tau$, and the holomorphic induced representation $\hol^H_K$ on the space of holomorphic sections of $\mathcal{V}_\tau$.

A formula for the character of $\inl2^H_K(\tau)$ was given by Berline and Vergne \cite{BV0} as the $H$-equivariant index of the zero symbol, while the character of $\hol^H_K(\tau)$ is the $H$-equivariant index of the elliptic Dolbeault-Dirac operator (given by the equivariant Riemann-Roch formula).

We can, by varying the rank of $E$, view both of these formulas as special cases of our formula (\ref{compint}), for $\mathcal{V} = \mathcal{V}_\tau$: When $E = 0$, we have $\sigma = 0$, and obtain the Berline-Vergne formula for the character of $\inl2^H_K(\tau)$.  When $E=TM$, $\sigma$ becomes the symbol of the Dolbeault-Dirac operator, giving the character of $\hol^H_K(\tau)$.

\section{Transverse group actions}
Let $M$ be a differentiable manifold, and $E\subset TM$ a given distribution.
Suppose that a compact Lie group $H$ acts on $M$ preserving $E$, and let $\mathfrak{h}_M\subset TM$ denote the set of vectors tangent to the orbits of $H$ in $M$.  
\begin{definition}
We say the action of $H$ on $M$ is {\em transverse} to $E$ if $E+\mathfrak{h}_M = TM$.
\end{definition}
Let $\theta\in\mathcal{A}^1(T^*M)$ denote the canonical 1-form on $T^*M$, and let $f_\theta:T^*M\rightarrow \mathfrak{h}^*$ denote the corresponding moment map.  We denote by $T^*_HM = f_\theta^{-1}(0)$ the set of covectors orthogonal to the $H$-orbits.
We may then equivalently define the action of $H$ to be transverse to $E$ if it satisfies
\begin{equation}
T^*_HM\cap E^0 = \{0\}.\label{transverse}
\end{equation}

\begin{remark}\label{lfa}
The assumption of transversality implies that $\rank E^0\leq \dim H$.  In the case that $\rank E^0 = \dim H$, the action of $H$ on $M$ is locally free. More precisely, at any $x\in M$, one has $\rank E^0 \leq \dim H - \dim H_x$, whence $\rank E^0 = \dim H$ implies that $\dim H_x = 0$.  A sub-bundle $E$ that is transverse to the $H$-orbits is then the space of horizontal vectors with respect to some choice of connection, and the anihilator $E^0=M\times\mathfrak{h}^*$ is trivial.  We are then in the same setting considered in \cite{BV2} in the case of a free action, or \cite{V1}, in the orbifold setting (see Section \ref{vergnecomp} below).  We are thus considering a broader class of group actions in this paper, since any locally free action will be transverse to a horizontal distribution, but not all actions satisfying \eqref{transverse} are locally free.
\end{remark}

Given the action of a group $H$ on a set $V$ and any $h\in H$, we let $V(h)= \{v\in V|h\cdot v = v\}$ denote the corresponding set of elements fixed by the $H$-action.  For example, if $H$ is a Lie group, then $H(h)$ denotes the centralizer of $h$ in $H$, and $\mathfrak{h}(h)$ denotes its Lie algebra, the set of points fixed by $h$ under the adjoint action.  If $H$ is a compact Lie group acting on a manifold $M$, we have the decomposition
\begin{equation*}
TM|_{M(h)} = TM(h)\oplus\mathcal{N},
\end{equation*}
where $TM(h) = \ker(h - Id)$ denotes the points in $TM$ fixed by the 
action of $h$, and $\mathcal{N} = \im(h-Id)$ denotes the normal 
bundle.  From the corresponding action on $T^*M$, we have the canonical identification $T^*(M(h)) \cong 
(T^*M)(h)$. 
With respect to the action of $H(h)$ on $T^*M(h)$, we note the following lemmas:
\begin{lemma}{\rm \cite[Lemma 19]{BV1}}\label{l3}
\begin{enumerate}
\item The canonical 1-form $\theta^h$ on $T^*M(h)$ is the pullback under inclusion of the canonical 1-form $\theta$ on $T^*M$. 
\item The corresponding moment map $f_{\theta^h}:T^*M(h)\rightarrow 
\mathfrak{h}^*(h)$ is given by the restriction of $f_\theta$ to $T^*M(h)$.
\item $T^*_{H(h)}M(h) = f^{-1}_{\theta^h}(0) = (T^*_HM)(h)$.
\end{enumerate}
\end{lemma}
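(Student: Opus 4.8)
The plan is to establish the three assertions in order, deriving the later parts from the earlier ones together with standard facts about the cotangent lift of the action. Throughout I write $\imath:M(h)\hookrightarrow M$ for the inclusion of the fixed-point submanifold and $j:T^*M(h)\cong (T^*M)(h)\hookrightarrow T^*M$ for the induced inclusion of cotangent spaces, defined via the splitting $T_xM = T_xM(h)\oplus\mathcal{N}_x$: a covector $\xi\in T^*_xM(h)$ is sent to the unique $\tilde\xi\in T^*_xM$ that vanishes on $\mathcal{N}_x$ and restricts to $\xi$ on $T_xM(h)$. This map satisfies the projection identity $\imath\circ\pi_h = \pi\circ j$, where $\pi$ and $\pi_h$ are the two cotangent projections. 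For part (1) I would then compute $j^*\theta$ directly from the tautological definition of the canonical form: for $v\in T_{(x,\xi)}(T^*M(h))$ one has $(j^*\theta)(v) = \tilde\xi\bigl(d\pi(dj(v))\bigr) = \tilde\xi\bigl(d\imath(d\pi_h(v))\bigr)$ by the projection identity, and since $d\pi_h(v)\in T_xM(h)$, on which $\tilde\xi$ agrees with $\xi$, this equals $\xi(d\pi_h(v)) = \theta^h(v)$. Thus $\theta^h = j^*\theta$; this is the geometric heart of the lemma and the one place where the explicit form of the identification $T^*M(h)\cong(T^*M)(h)$ enters.

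For part (2), the key preliminary observation is that for $Y\in\mathfrak{h}(h) = \ker(\mathrm{Ad}(h)-\Id)$ the one-parameter group $\exp(tY)$ commutes with $h$, hence preserves both $M(h)$ and $(T^*M)(h)$. Consequently the generating vector field $Y_{T^*M}$ is tangent to $(T^*M)(h)$ and is $j$-related to the generating field $Y_{T^*M(h)}$ on the fixed submanifold. Recalling that the moment map of the canonical form is obtained by contraction, $\langle f_\theta,Y\rangle = \iota(Y_{T^*M})\theta$, I would compute $\langle f_{\theta^h},Y\rangle = \iota(Y_{T^*M(h)})\theta^h = \iota(Y_{T^*M(h)})\,j^*\theta = j^*\bigl(\iota(Y_{T^*M})\theta\bigr) = \langle f_\theta,Y\rangle\circ j$, using part (1) and the $j$-relatedness. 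This identifies $f_{\theta^h}$ with the restriction of $f_\theta$ to $T^*M(h)$, followed by restriction of the resulting functional to $\mathfrak{h}(h)\subset\mathfrak{h}$.

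For part (3), the first equality is simply the definition of $T^*_{H(h)}M(h)$ as the zero set of $f_{\theta^h}$. For the second, I would invoke equivariance of the moment map: since $h$ preserves $\theta$, the map $f_\theta$ is $\mathrm{Ad}^*(h)$-equivariant, so at a point $(x,\tilde\xi)\in(T^*M)(h)$ the value $f_\theta(x,\tilde\xi)$ is fixed by $\mathrm{Ad}^*(h)$ and therefore annihilates the $\mathrm{Ad}(h)$-invariant complement $\im(\mathrm{Ad}(h)-\Id)$ of $\mathfrak{h}(h)$ in $\mathfrak{h}$. Hence $f_\theta(x,\tilde\xi)$ vanishes on all of $\mathfrak{h}$ if and only if it vanishes on $\mathfrak{h}(h)$, and by part (2) the latter is precisely the condition $f_{\theta^h}(x,\xi)=0$. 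This yields $f_{\theta^h}^{-1}(0) = f_\theta^{-1}(0)\cap(T^*M)(h) = (T^*_HM)(h)$, as claimed.

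I expect the main obstacle to be bookkeeping rather than conceptual: making the identification $T^*M(h)\cong(T^*M)(h)$ and the compatibility $\imath\circ\pi_h = \pi\circ j$ precise enough that the tautological computation in part (1) and the $j$-relatedness used in part (2) are rigorous, and that the $\mathrm{Ad}(h)$-splitting of $\mathfrak{h}$ dualizes correctly in part (3). Once the pullback identity $\theta^h = j^*\theta$ is in hand, parts (2) and (3) follow formally from contraction with generating fields and from moment-map equivariance.
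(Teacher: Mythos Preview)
The paper does not supply its own proof of this lemma; it is quoted verbatim from \cite[Lemma~19]{BV1} and used as a black box.  Your argument is a correct and standard unpacking of that result: the identification $(T^*M)(h)\cong T^*M(h)$ via the annihilator of $\mathcal{N}$, the tautological computation for $j^*\theta$, $j$-relatedness of generating fields for $Y\in\mathfrak{h}(h)$, and the equivariance/semisimplicity argument showing that $f_\theta$ already lands in $\mathfrak{h}^*(h)$ on the fixed locus.  One cosmetic point: the paper's sign convention for the moment map is $\langle f_\lambda,\xi\rangle = -\langle\lambda,\xi_N\rangle$ (equation \eqref{mommap}), so your contraction formula should carry a minus sign to match; this does not affect the logic.
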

\begin{lemma}\label{em}
For any $h\in H$, we have the identification
\begin{equation}
\mathfrak{h}_M(h) = \mathfrak{h}(h)_M.
\end{equation}
\end{lemma}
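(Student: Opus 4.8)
The plan is to compare the two sides fibrewise over the fixed-point set $M(h)$. Both $\mathfrak{h}_M(h)$ and $\mathfrak{h}(h)_M$ are sub-bundles of $TM|_{M(h)}$, so it suffices to show that their fibres agree at each $x\in M(h)$. Writing $X_M$ for the fundamental vector field of $X\in\mathfrak{h}$, the fibre $(\mathfrak{h}_M)_x$ is the image of the evaluation map $\mathrm{ev}_x\colon\mathfrak{h}\to T_xM,\ X\mapsto X_M(x)$. The engine of the whole argument is the single identity
\[
h\cdot X_M(x)=(\mathrm{Ad}(h)X)_M(x),\qquad x\in M(h),
\]
which I would obtain by specialising the standard equivariance $(\Phi_g)_*X_M=(\mathrm{Ad}(g)X)_M\circ\Phi_g$ of fundamental vector fields to $g=h$ and using $\Phi_h(x)=x$. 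Thus $\mathrm{ev}_x$ intertwines $\mathrm{Ad}(h)$ on $\mathfrak{h}$ with the tangent action $h\cdot(-)$ on $T_xM$. The inclusion $\mathfrak{h}(h)_M\subseteq\mathfrak{h}_M(h)$ is then immediate: if $Y\in\mathfrak{h}(h)$, i.e. $\mathrm{Ad}(h)Y=Y$, the identity gives $h\cdot Y_M(x)=Y_M(x)$, so $Y_M(x)$ is an $h$-fixed orbit vector.

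For the reverse inclusion I would use the orthogonal splitting $TM|_{M(h)}=TM(h)\oplus\mathcal{N}$ already recorded above, with $TM(h)=\ker(h-\Id)$ and $\mathcal{N}=\im(h-\Id)$, taken with respect to an $H$-invariant metric, together with the analogous $\mathrm{Ad}(h)$-orthogonal splitting $\mathfrak{h}=\ker(\mathrm{Ad}(h)-\Id)\oplus\im(\mathrm{Ad}(h)-\Id)=\mathfrak{h}(h)\oplus\mathfrak{h}(h)^{\perp}$, which exists because compactness of $H$ makes $\mathrm{Ad}(h)$ orthogonal. Since $\mathrm{ev}_x$ intertwines the two operators, it carries $\mathfrak{h}(h)$ into $\ker(h-\Id)=T_xM(h)$ and $\im(\mathrm{Ad}(h)-\Id)$ into $\im(h-\Id)=\mathcal{N}_x$.

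Given these, take any $h$-fixed orbit vector $v=X_M(x)$ and decompose $X=X_0+X_1$ with $X_0\in\mathfrak{h}(h)$ and $X_1\in\im(\mathrm{Ad}(h)-\Id)$. Then $v=\mathrm{ev}_x(X_0)+\mathrm{ev}_x(X_1)$ is precisely the decomposition of $v$ into its $T_xM(h)$- and $\mathcal{N}_x$-components. As $v$ is fixed by $h$, its $\mathcal{N}_x$-component must vanish, so by uniqueness of the orthogonal decomposition $\mathrm{ev}_x(X_1)=0$ and $v=(X_0)_M(x)\in(\mathfrak{h}(h)_M)_x$. This yields the remaining inclusion and hence the fibrewise equality for every $x\in M(h)$. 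I expect the only real subtlety to be this last point—that an $h$-fixed vector has no component in $\im(h-\Id)$—which is exactly the orthogonality of the fixed and normal parts of the splitting; once the intertwining identity is in hand, the rest is bookkeeping.
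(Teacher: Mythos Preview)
Your argument is correct. Both inclusions are established cleanly, and the key intertwining identity $h\cdot X_M(x)=(\mathrm{Ad}(h)X)_M(x)$ together with the orthogonal splittings $\mathfrak{h}=\mathfrak{h}(h)\oplus\im(\mathrm{Ad}(h)-\Id)$ and $T_xM=T_xM(h)\oplus\mathcal{N}_x$ does exactly what you need.

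The paper's proof is organized a little differently. Rather than decomposing $\mathfrak{h}$ under $\mathrm{Ad}(h)$, it fixes $x\in M(h)$, identifies $(\mathfrak{h}_M)_x\cong\mathfrak{h}/\mathfrak{h}_x$, and chooses an $H_x$-equivariant splitting $s\colon\mathfrak{h}/\mathfrak{h}_x\to\mathfrak{h}$ of the short exact sequence $0\to\mathfrak{h}_x\to\mathfrak{h}\to\mathfrak{h}/\mathfrak{h}_x\to 0$. Since $h\in H_x$, equivariance gives $s\bigl((\mathfrak{h}/\mathfrak{h}_x)(h)\bigr)\subset\mathfrak{h}(h)$, which is precisely the lift of an $h$-fixed orbit vector to an $\mathrm{Ad}(h)$-fixed element of $\mathfrak{h}$; the ``easy'' inclusion is declared clear. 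Both arguments ultimately exploit compactness of $H$ (you to make $\mathrm{Ad}(h)$ orthogonal so that $\ker$ and $\im$ are complementary, the paper to average and obtain an equivariant splitting), and both boil down to producing an $\mathrm{Ad}(h)$-fixed lift. Your version is more hands-on and makes the mechanism explicit via the evaluation map and the matching $\ker/\im$ decompositions on source and target; the paper's version is terser and packages the same content into the single word ``equivariant.'' Neither has any real advantage in generality.
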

\begin{proof}
At any $x\in M$ we have that $\displaystyle \mathfrak{h}_M|_x \cong \mathfrak{h}/\mathfrak{h}_x$.  Choose an $H_x$-equivariant splitting $s:\mathfrak{h}/\mathfrak{h}_x\rightarrow \mathfrak{h}$ of the exact sequence
\begin{equation*}
0\rightarrow \mathfrak{h}_x\rightarrow\mathfrak{h}\rightarrow\mathfrak{h}/\mathfrak{h}_x\rightarrow 0.
\end{equation*}
By the equivariance of $s$, we thus have $s\left(\left(\mathfrak{h}/\mathfrak{h}_x\right)(h)\right)\subset \mathfrak{h}(h)$, whence $\mathfrak{h}_M(h)\subset\mathfrak{h}(h)_M$.  The opposite inclusion is clear, and thus the result follows.
\end{proof}

\begin{proposition}\label{fix}
If the action of $H$ on $M$ is transverse to $E\subset TM$, then for 
any $h\in H$, the action of $H(h)$ on $M(h)$ is transverse to $E(h)\subset 
TM(h)$. 
\end{proposition}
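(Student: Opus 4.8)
The plan is to verify the defining transversality condition $E(h)+\mathfrak{h}(h)_{M(h)} = TM(h)$ pointwise, working at a fixed $x\in M(h)$. Since $H(h)$ commutes with $h$ it preserves $M(h)$, so the $H(h)$-orbit through $x$ lies in $M(h)$ and the orbit directions $(\mathfrak{h}(h)_{M(h)})_x = (\mathfrak{h}(h)_M)_x$ are automatically contained in $T_xM(h)$. By the defining condition $E+\mathfrak{h}_M = TM$ we have $E_x + (\mathfrak{h}_M)_x = T_xM$, and the goal is to show that passing to the $h$-fixed subspaces preserves this equality, i.e.\ that $E_x(h) + (\mathfrak{h}_M)_x(h) = T_xM(h)$, and then to identify the terms with those in the statement.

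The main step is a linear-algebra lemma about $(dh)_x$. Because $H$ is compact, $(dh)_x$ is semisimple, with eigenvalue-$1$ eigenspace equal to $T_xM(h)$ and $h$-invariant complement $\mathcal{N}_x = \im(h-\Id)$ spanned by the remaining eigenspaces, so that $T_xM = T_xM(h)\oplus\mathcal{N}_x$. Both $E_x$ and $(\mathfrak{h}_M)_x$ are $(dh)_x$-invariant: for $E_x$ this is the $H$-invariance of $E$ at the fixed point $x$, and for $(\mathfrak{h}_M)_x$ it follows from $(dh)_x X_M(x) = (\Ad(h)X)_M(x)$. For any $h$-invariant subspace $A$ one then has $A = A(h)\oplus(A\cap\mathcal{N}_x)$ with $A(h)\subseteq T_xM(h)$ and $A\cap\mathcal{N}_x\subseteq\mathcal{N}_x$. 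Applying this to $A=E_x$, to $B=(\mathfrak{h}_M)_x$, and to $A+B$, the equality $A+B = T_xM$ decomposes along $T_xM(h)\oplus\mathcal{N}_x$ and forces the eigenvalue-$1$ components to match, yielding $E_x(h)+(\mathfrak{h}_M)_x(h) = T_xM(h)$.

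It remains to identify these fixed subspaces with the objects in the statement. By definition $E_x(h) = E_x\cap T_xM(h) = E(h)_x$ and $(T_xM)(h) = T_xM(h)$, while Lemma \ref{em} gives $(\mathfrak{h}_M)_x(h) = (\mathfrak{h}_M(h))_x = (\mathfrak{h}(h)_M)_x = (\mathfrak{h}(h)_{M(h)})_x$. Substituting these yields $E(h)_x + (\mathfrak{h}(h)_{M(h)})_x = T_xM(h)$ for every $x\in M(h)$, which is precisely transversality of the $H(h)$-action to $E(h)$. I expect the only genuine obstacle to be the spanning lemma for $h$-fixed parts, whose validity rests on the semisimplicity of $(dh)_x$ together with the invariance of both $E_x$ and $(\mathfrak{h}_M)_x$; once these are in place the remainder is bookkeeping, with Lemma \ref{em} doing the essential work of matching the $h$-fixed orbit directions to the $H(h)$-orbit directions.
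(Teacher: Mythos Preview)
Your proof is correct and follows essentially the same route as the paper's: both reduce to the identity $(E+\mathfrak{h}_M)(h)=E(h)+\mathfrak{h}_M(h)$ and then invoke Lemma~\ref{em}. The paper justifies this identity in one phrase (``by averaging with respect to the subgroup generated by $h$''), whereas you spell out the underlying linear algebra via semisimplicity of $(dh)_x$ and the eigenspace splitting---but the averaging projector is precisely the projection onto the $1$-eigenspace, so the two arguments coincide.
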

\begin{proof}
If $H$ acts on $M$ transverse to $E$, then we have 
\begin{equation*}
TM(h) = (E+\mathfrak{h}_M)(h) = E(h) + \mathfrak{h}_M(h) = E(h)+\mathfrak{h}(h)_M,
\end{equation*}
by averaging with respect to the subgroup generated by $h$, and then using Lemma \ref{em}.
\end{proof}

\section{Equivariant differential forms with generalized coefficients}
Let $N$ be a smooth manifold, not necessarily compact, equipped with the action of a Lie group $G$.  Let 
$\mathcal{A}^\infty(\mathfrak{g}, N)$ denote the complex of smooth 
equivariant differential forms on $N$. These are the smooth maps 
$\alpha :\mathfrak{g} = \Lie(G) \rightarrow \mathcal{A}(N)$ that are equivariant 
with respect to the actions of $G$ on $\mathfrak{g}$ and $N$.
The space $\mathcal{A}^\infty(\mathfrak{g},N)$ is equipped with the 
equivariant differential $D$, given by
\begin{equation*}
(D\alpha)(X) = d(\alpha(X)) - \iota(X_N)\alpha(X),
\end{equation*}
for any $X\in\mathfrak{g}$, where $\iota(X_N)$ is contraction by the 
fundamental vector field on $N$ generated by the infinitesimal action of 
$X\in\mathfrak{g}$.  We can pass to the complex $\mathcal{A}^{-\infty}(\mathfrak{g},N)$ 
of equivariant differential forms with generalized coefficients by allowing $G$-equivariant $C^{-\infty}$ maps from $\mathfrak{g}$ to $\mathcal{A}(N)$ \cite{KV}.  That is, $\alpha\in\mathcal{A}^{-\infty}(\mathfrak{g},N)$ if for any compactly supported test function $\phi\in C^\infty(\mathfrak{g})$, the pairing
$\displaystyle \int_{\mathfrak{g}}\alpha(X)\phi(X)dX$ defines a smooth differential form on $N$.  The equivariant differential $D$ extends to $\mathcal{A}^{-\infty}(\mathfrak{g},N)$, and in either space we have $D^2=0$, so that one may define the equivariant cohomology spaces $\mathcal{H}^{\pm\infty}(\mathfrak{g},N)$.

\begin{example}
Let $G=S^1$ act on $N=S^1$ by rotation.  Suppose 
$\alpha\in\mathcal{A}^{-\infty}(\mathfrak{g},N)$ has odd degree.  Then $\alpha(\xi) = f(\xi,\eta)d\eta$, where $f$ depends smoothly on 
the 
coordinate $\eta$ on $N$, and distributionally on the $G$ coordinate $\xi$.  
Then $D\alpha(\xi) = \xi f(\xi,\eta)$, so that $D\alpha = 0$ if and only if 
$f(\xi,\eta) = h(\eta)\delta(\xi)$ for some smooth function $h(\eta)$, where $\delta(\xi)$ denotes the Dirac measure on $S^1$.  The 
exactness of  
$\alpha$ is equivalent to $h(\eta)$ being a derivative, from which we see 
that the odd 
part of $\mathcal{H}^{-\infty}(\mathfrak{g},N)$ is generated by 
$\delta(\xi)d\eta$.
\end{example}

\begin{example}\cite{PV3}\label{eg2}
Suppose that $N$ is a principal $H$-bundle over a compact base $M$, and suppose we are given the smooth action of a Lie group $G$ on $N$ commuting with the principal $H$-action.  Let $\psi$ be a $G$-invariant connection form on $N$ with curvature $\Psi = d\psi + \frac{1}{2}[\psi,\psi]$.  For any $Y\in\mathfrak{g}$, let $\Psi(Y) = \Psi - \psi(Y_N)$ denote the corresponding equivariant curvature.  Since $\Psi\in\mathcal{A}^2(N)$ is nilpotent, for any smooth function $\phi\in C^\infty(\mathfrak{h})$, the form $\phi(\Psi(Y))$ is given in terms of a Taylor series expansion about $\psi(Y_N)$.  Moreover, if $\phi$ is $H$-invariant, then $\phi(\Psi(Y))$ is basic, and thus defines a smooth $G$-equivariant differential form on $M$.

We obtain an $H\times G$-equivariant differential form with generalized coefficients $\delta(X-\Psi(Y))$ defined on $N$ by 
\begin{equation}\label{del}
\int_{\mathfrak{h}\times\mathfrak{g}}\delta(X-\Psi(Y))\phi(X,Y)dXdY = \vol(H,dX)\int_{\mathfrak{g}}\phi(\Psi(Y),Y)dY,
\end{equation}
for any compactly supported $\phi\in C^{\infty}(\mathfrak{h}
\times\mathfrak{g})$.  (The form is smooth with respect to the variable $Y\in\mathfrak{g}$.)
Using the fact that $\Psi$ is basic, we may define a corresponding form  $\delta_0(X-\Psi(Y))$ on $M$ by setting
\begin{equation}\label{del0}
\int_\mathfrak{h}\delta_0(X-\Psi(Y))\phi(X)dX = \vol(H,dX)\phi(\Psi(Y)),
\end{equation}
for any $H$-invariant $\phi\in C^{\infty}(\mathfrak{h})$.
\end{example}

\begin{example}\label{eg3}
The following example of an equivariant differential form with generalized 
coefficients is due to Paradan \cite{P1,P2,PV2,PV3}:

Let $N$ be a $G$-manifold, and let $\lambda$ be a smooth, $G$-invariant 
1-form on $N$.  Define the corresponding $\lambda$-moment map 
$f_\lambda:N\rightarrow \mathfrak{g}^*$ by
\begin{equation}
<f_\lambda,\xi> = -<\lambda, \xi_N>.\label{mommap}
\end{equation}
Then, on $N\setminus f_\lambda^{-1}(0)$, the form
\begin{equation}
\beta(\lambda)(X) = -i\lambda\int^\infty_0 e^{itD\lambda(X)} d t
\end{equation}
is well-defined as a $G$-equivariant form with generalized coefficients, 
and satisfies $D\beta_\lambda(X) = 1$ away from $f_\lambda^{-1}(0)$. (Roughly speaking, $\beta(\lambda)(X)$ is the generalized coefficients analog of the smooth form $\frac{\lambda}{D\lambda(X)}$ that appears in the proof of the Duistermaat-Heckman formula.)

Choose a $G$-equivariant neighbourhood $U$ of $f^{-1}_\lambda(0)$ in $N$, and let $\chi\in C^\infty(N)$ be a $G$-invariant cutoff function supported on $U$, such that $\chi\equiv 1$ on a smaller neighbourhood contained in $U$.  Define
\begin{equation}
P_{\lambda}(X) = \chi + d\chi\wedge\beta(\lambda).\label{Plam}
\end{equation}
Then $P_{\lambda}$ is a closed equivariant differential form with generalized coefficients supported in $U$, and its cohomology class in $\mathcal{H}^{-\infty}(\mathfrak{h},U)$ does not depend on the choice of cutoff function $\chi$ \cite{P1}.  We note that $P_{\lambda}(X) = 1 + D((\chi-1)\beta(\lambda)(X))$, so that $P_\lambda$ represents 1 in the generalized equivariant cohomology of $N$.
\end{example}

\begin{remark}\label{cwf}
Suppose that $N$ is a principal $H$-bundle equipped with an action of a Lie group $G$ commuting with the $H$-action, and a $G$-invariant connection 1-form $\psi$.  Define a 1-form $\nu$ on $N\times \mathfrak{h}^*$ by $\nu = <\xi,\psi>$, where $\xi$ denotes the $\mathfrak{h}^*$ variable.  We may then construct the $H\times G$-equivariant differential form with generalized coefficients $P_\nu(X,Y)$ using a sufficiently small neighbourhood $U$ of $N\times \{0\}$, and we have the following result from \cite{PV3}:
\begin{lemma}
Let $q:N\times\mathfrak{h}^*\rightarrow N$ denote projection onto the first factor.  Let $\psi_1,\ldots,\psi_r$ denote the components of $\psi$ with respect to some choice of basis for $\mathfrak{h}$.  If $\mathfrak{h}^*$ is oriented with respect to the corresponding dual basis, then
\begin{equation}\label{cweq}
q_*P_\nu(X,Y) = (2\pi i)^{\dim H} \delta(X-\Psi(Y))\frac{\psi_r\cdots\psi_1}{\vol(H,dX)}
\end{equation}
for any $(X,Y)\in \mathfrak{h}\times\mathfrak{g}$.
\end{lemma}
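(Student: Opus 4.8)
The plan is to evaluate the fibre integral directly by reducing it to an oscillatory integral over $\mathfrak{h}^*$. Fix a basis $\{e_a\}$ of $\mathfrak{h}$ with dual coordinates $\xi_a$ on $\mathfrak{h}^*$, so that $\nu = \sum_a \xi_a\psi_a$, where $\psi_a$ are the components of $\psi$. First I would compute the equivariant differential on $N\times\mathfrak{h}^*$,
\begin{equation*}
D\nu(X,Y) = d\nu - \iota\big((X,Y)_{N\times\mathfrak{h}^*}\big)\nu = A + B - c,
\end{equation*}
where $A = \sum_a d\xi_a\wedge\psi_a$, $B = \sum_a \xi_a\,d\psi_a$, and $c = \langle\xi, X+\psi(Y_N)\rangle$; here I use that $\nu$ has no $d\xi$-component, so only the $N$-component of the fundamental vector field enters, together with $\psi(X_N) = X$. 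In particular the associated moment map $f_\nu$ vanishes precisely on $N\times\{0\}$, which is what allows $P_\nu$ to be built on a neighbourhood $U$ of $N\times\{0\}$ as in Example \ref{eg3}. Since $\chi$ is a $0$-form while the fibre $\mathfrak{h}^*$ has positive dimension $r = \dim H$, we have $q_*\chi = 0$, and hence $q_* P_\nu = q_*\big(d\chi\wedge\beta(\nu)\big)$; this fibre integral converges because $d\chi$ is compactly supported in $\xi$ and vanishes on the region where $\beta(\nu)$ is singular. As $A$, $B$ and $c$ pairwise commute, I would then factor $e^{itD\nu} = e^{itA}e^{itB}e^{-itc}$.

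The second step is a degree count in the fibre directions. The fibre integral picks out the coefficient of $d\xi_1\wedge\cdots\wedge d\xi_r$, and since $B$, $c$ and $\nu$ contain no $d\xi$, these $r$ one-forms must be supplied by $A$ (one per factor) and at most one by $d\chi$. A direct computation gives $A^r/r! = (-1)^{r(r-1)/2}\,d\xi_1\cdots d\xi_r\,\psi_1\cdots\psi_r$, so the contribution in which all $r$ differentials come from $\beta(\nu)$ already carries the full product $\psi_1\cdots\psi_r$; wedging with the remaining factor $\nu = \sum_b \xi_b\psi_b$ then annihilates it. Hence only the terms with $r-1$ differentials from $A^{r-1}$ and one from $d\chi$ survive, and in these the factor $\nu$ must supply the one missing component $\psi_c$, so that summing over the index $c$ produces the radial derivative $\sum_c \xi_c\,\partial_{\xi_c}\chi$ together with the full product $\psi_1\cdots\psi_r$. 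Crucially, once $\psi_1\wedge\cdots\wedge\psi_r$ has been extracted any further $\psi$'s vanish, so in $e^{itB}$ I may replace $B$ by $\langle\xi,\Psi\rangle$, using $\psi_1\wedge\cdots\wedge\psi_r\wedge[\psi,\psi] = 0$ and $\Psi = d\psi + \tfrac12[\psi,\psi]$. The exponent thereby collapses to $-it\langle\xi,\, X - \Psi(Y)\rangle$, with $\Psi(Y) = \Psi - \psi(Y_N)$ as in Example \ref{eg2}.

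It then remains to evaluate
\begin{equation*}
\psi_1\cdots\psi_r\int_{\mathfrak{h}^*}\Big(\sum_c \xi_c\,\partial_{\xi_c}\chi\Big)\int_0^\infty (it)^{r-1}\,e^{-it\langle\xi,\,X-\Psi(Y)\rangle}\,dt\,d\xi,
\end{equation*}
up to the overall factor $-i$ from $\beta(\nu)$ and a reordering sign. Substituting $u = t\xi$ and integrating first in $t$, the resulting one-dimensional integral telescopes, $\int_0^\infty \tfrac{dt}{t}\,(\xi\cdot\nabla_\xi\chi)(u/t) = -1$, because its integrand equals $-\tfrac{d}{dt}\chi(u/t)$ and $\chi$ interpolates between $1$ at $\xi = 0$ and $0$ at infinity; the remaining integral $\int_{\mathfrak{h}^*} e^{-i\langle u,\,X-\Psi(Y)\rangle}\,du$ is the Fourier representation of $(2\pi)^r$ times the standard delta, whose expansion in the two-form part $\Psi$ reproduces the generalized function $\delta(X-\Psi(Y))$ of Example \ref{eg2} divided by $\vol(H,dX)$. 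Collecting the powers of $i$ gives $(-i)\cdot i^{r-1}\cdot(-1)\cdot(2\pi)^r = (2\pi i)^{r}$, which yields the stated formula. I expect the main obstacle to be making these oscillatory manipulations rigorous as an identity of generalized functions in $X$ — in particular justifying the exchange of the $t$-integral with the fibre integral and the substitution $u = t\xi$ — together with the careful sign bookkeeping that converts $\psi_1\cdots\psi_r$ into the reversed product $\psi_r\cdots\psi_1$; the algebraic reduction $B\rightsquigarrow\langle\xi,\Psi\rangle$, by contrast, is clean.
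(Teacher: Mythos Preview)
Your argument is correct, and it is essentially the same computation the paper carries out in the proof of Theorem~\ref{l2}: expand $e^{itD\nu}$, isolate the term of top fibre-degree in the $d\xi$'s via the factorisation of $d\chi\wedge\nu\wedge e^{itA}$, substitute $u=t\xi$, recognise $\int_0^\infty \tfrac{d}{dt}(-\chi(u/t))\,dt$, and identify the remaining $\mathfrak{h}^*$-integral as a Fourier representation of the delta. The paper itself does not prove this particular lemma --- it is quoted from \cite{PV3} --- but your write-up transports the method of Theorem~\ref{l2} to the principal-bundle setting, with the one extra wrinkle (which you handle correctly) that $d\psi$ must be traded for the curvature $\Psi$ using $\psi_1\wedge\cdots\wedge\psi_r\wedge[\psi,\psi]=0$.

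Two minor remarks. First, your identification of the Fourier integral with $\delta(X-\Psi(Y))/\vol(H,dX)$ is right, but it is worth saying explicitly that this is because the paper's $\delta(X-\Psi(Y))$ in Example~\ref{eg2} is \emph{normalised} by $\vol(H,dX)$ rather than being the bare Euclidean delta; a reader could easily miss this. Second, the sign and ordering bookkeeping you flag as a potential obstacle is genuinely fiddly but purely mechanical: the passage from $\psi_1\cdots\psi_r$ to $\psi_r\cdots\psi_1$ costs $(-1)^{r(r-1)/2}$, which must be matched against the sign in $A^r/r!$ and the orientation convention for $q_*$; these cancel. The analytic justifications (exchange of $t$- and $\xi$-integrals, the substitution $u=t\xi$) are standard oscillatory-integral manipulations of the type already invoked in the paper and in \cite{P1,P2}, so I would not call them an obstacle.
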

\end{remark}

\section{The differential form $\mathcal{J}(E,X)\in\mathcal{A}^{-\infty}(\mathfrak{h},M)$}\label{jsect}
Let $M$ be a smooth manifold, and let $E \subset TM$ denote a given distribution. We suppose that $E$ is co-oriented; that is, that its anihilator $E^0$ is oriented.  We suppose a Lie group $H$ acts on $M$ preserving $E$ and the orientation on $E^0$, such that the action is transverse to $E$.
Let $\imath:E^0\hookrightarrow T^*M$ denote inclusion, and let $q:E^0\rightarrow M$ denote projection.
\begin{definition}
We denote by $\mathcal{J}(E,X)\in\mathcal{A}^{-\infty}(\mathfrak{h},M)$ the equivariant differential form with generalized coefficients given by
\begin{equation}\label{J}
\mathcal{J}(E,X) = (2\pi i)^{-k}q_*\imath^*e^{iD\theta(X)},
\end{equation}
where $k=\rank E^0$.
\end{definition}
We note that $\imath^*D\theta(X) = d\theta_0 + f_{\theta_0}(X)$, where $\theta_0 = \imath^*\theta$, and $f_{\theta_0}:E^0\rightarrow \mathfrak{h}^*$ is defined as in \eqref{mommap}. By the theory of oscillatory integrals in \cite{Hor}, \eqref{J} defines an equivariant differential form with generalized coefficients on $M$, since $f_{\theta_0}^{-1}(0) = T^*_HM\cap E^0 = 0$.  (In the language of Berline and Vergne, the form $\exp(D\theta_0(X))$ is ``rapidly decreasing in $\mathfrak{h}$-mean'' along the fibres of $E^0$.)  We note also that the form $\mathcal{J}(E,X)$ is equivariantly closed: we have $D\mathcal{J}(E,X) = 0$.

We now wish to proceed with a local construction of the form defined in \eqref{J}.  Although the above definition suffices to obtain our results, the structure and properties of $\mathcal{J}(E,X)$ are revealed more clearly by this local description.

Let $U\subset M$ be a trivializing neighbourhood for $E^0$, and let $\balpha = (\alpha_1,\ldots,\alpha_k)\in \mathcal{A}^1(U)\otimes \R^k$ be a local oriented frame for $E^0|_U$.
Given such a choice of frame, we define a map
\begin{equation*}
f_{\balpha} :U\rightarrow \Hom(\mathfrak{h},\R^k)
\end{equation*}
by $f_{\balpha}(X) = -\balpha(X_M)$, for any $X\in \mathfrak{h}$, where $X_M$ is the fundamental vector field on $M$ generated by $X$. The equivariant differential of $\balpha$ is thus $D\balpha(X) = d\balpha + f_{\balpha}(X) \in\mathcal{A}^2(U)\otimes \R^k$.

Let $\delta_0 \in C^{-\infty}(\R^k)$ denote the Dirac delta function on $\R^k$.
Since $||\balpha||\neq 0$ on $U$, the transversality assumption ensures that $f_{\balpha}(m)$ is non-zero for all $m\in U$.  Thus, for any derivative $\delta^{(I)}_0$, the composition $\delta^{(I)}_0\circ f_{\balpha}(m)$ is well-defined as a generalized function on $\mathfrak{h}$ (see \cite{Hor, Mel}).

The expression $\delta_0(D\balpha(X))$ can be described in terms of its Taylor expansion as
\begin{align*}
\delta_0(D\balpha(X)) &= \delta_0(d\balpha + f_{\balpha}(X))\\
&=\sum^\infty_{|I|=0} \frac{\delta^{(I)}_0(f_{\balpha}(X))}{I!}d\alpha^I,
\end{align*}
where for $I = (i_1,\ldots,i_k)$, $I! = i_1!\cdots i_k!$, $|I| = i_1+\cdots +i_k$, $\delta_0^{(I)} = \left(\frac{\partial}{\partial x_1}\right)^{i_1}\cdots \left(\frac{\partial}{\partial x_k}\right)^{i_k}\delta_0$, and $d\alpha^I = d\alpha_1^{i_1}\wedge\cdots \wedge d\alpha_k^{i_k}$.

Since $\delta_0(D\balpha(X))$ is given in terms of the pullback of the Dirac delta function on $\R^k$, its pairing against a test function on $\mathfrak{g}$ depends on the map $f_{\balpha}$ and hence does not admit a simple description such as (\ref{del0}) above.  However, we can give the following representation of $\delta_0(D\balpha(X))$ in terms of the inverse Fourier transform:
\begin{equation}\label{fourier}
\delta_0(D\balpha(X)) = \frac{1}{(2\pi)^k}\int_{(\R^k)^*}e^{-i<\xi,D\balpha(X)>}d\xi,
\end{equation}
where $\left<\xi,D\balpha(X)\right> = \sum_{i=1}^k \xi^j(d\alpha_j+\alpha_j(X_M))$ and $d\xi = d\xi^1\cdots d\xi^k$ with respect to the basis for $(\R^k)^*$ dual to the one defined by the frame $\balpha$.

We now define an equivariant differential form with generalized coefficients on $U$ by
\begin{equation*}
\mathcal{J}_{\balpha}(E,X) = \alpha_k\wedge\cdots\wedge\alpha_1 \wedge\delta_0(D\balpha(X)).
\end{equation*}

\begin{lemma}\label{l1}
The form $\mathcal{J}_{\balpha}(E,X)$ does not depend on the choice of oriented frame $\balpha$.
\end{lemma}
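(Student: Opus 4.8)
The plan is to encode the change of frame by a transition function and track how each factor of $\mathcal{J}_\balpha(E,X)$ transforms, showing that the two contributions cancel. Let $\balpha$ and $\balpha'$ be two oriented frames for $E^0|_U$. Since $E^0$ is oriented of rank $k$, there is a unique smooth $g\colon U\to GL^+(k,\R)$ with $\balpha' = g\balpha$, i.e. $\alpha'_i = \sum_j g_{ij}\alpha_j$; as $g$ takes values in $GL^+(k,\R)$ we have $\det g>0$ throughout $U$. This is the only place co-orientation enters, and it is precisely what lets us avoid an absolute value below. A standard multilinear computation gives $\alpha'_k\wedge\cdots\wedge\alpha'_1 = (\det g)\,\alpha_k\wedge\cdots\wedge\alpha_1$, so it remains to prove that $\delta_0(D\balpha'(X))$ contributes a compensating factor $(\det g)^{-1}$ once it is wedged against the top-degree form $\alpha_k\wedge\cdots\wedge\alpha_1$.

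First I would compute the equivariant differential of the new frame. Since $g$ is a matrix of functions, the Leibniz rule for $D$ gives $D\balpha'(X) = g\,D\balpha(X) + (dg)\,\balpha$, where the second summand has $i$-th component $\sum_j dg_{ij}\wedge\alpha_j$ and therefore lies in $\mathcal{I}\otimes\R^k$, with $\mathcal{I}$ the ideal of $\mathcal{A}(U)$ generated by $\alpha_1,\dots,\alpha_k$. The key reduction is to show that $\delta_0(D\balpha'(X)) \equiv (\det g)^{-1}\delta_0(D\balpha(X)) \pmod{\mathcal{I}}$. I would establish this using the Fourier representation \eqref{fourier}: writing the phase as $\langle\xi,D\balpha'(X)\rangle = \langle g^{T}\xi,D\balpha(X)\rangle + \langle\xi,(dg)\balpha\rangle$, the last term lies in $\mathcal{I}$, so $e^{-i\langle\xi,D\balpha'(X)\rangle}\equiv e^{-i\langle g^{T}\xi,D\balpha(X)\rangle}\pmod{\mathcal{I}}$, because $\mathcal{I}$ is an ideal and the extra exponential factor differs from $1$ by an element of $\mathcal{I}$. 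Substituting $\eta = g^{T}\xi$, whose Jacobian is $\det g>0$ and which moreover intertwines the dual coframes used to define the two fibre measures $d\xi$, converts the integral for $\balpha'$ into $(\det g)^{-1}$ times the integral for $\balpha$, giving the claimed congruence.

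Finally I would wedge with the top form. Because $\alpha_k\wedge\cdots\wedge\alpha_1\wedge\gamma = 0$ for every $\gamma\in\mathcal{I}$ (some generator $\alpha_j$ is already present), the congruence above becomes an equality after wedging, so that $\alpha_k\wedge\cdots\wedge\alpha_1\wedge\delta_0(D\balpha'(X)) = (\det g)^{-1}\,\alpha_k\wedge\cdots\wedge\alpha_1\wedge\delta_0(D\balpha(X))$. Combining this with $\alpha'_k\wedge\cdots\wedge\alpha'_1 = (\det g)\,\alpha_k\wedge\cdots\wedge\alpha_1$ cancels the two determinant factors and yields $\mathcal{J}_{\balpha'}(E,X) = \mathcal{J}_\balpha(E,X)$.

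I expect the main obstacle to be making the congruence modulo $\mathcal{I}$ rigorous at the level of generalized-function-valued forms, rather than merely as a formal series identity: one must check that the oscillatory integral \eqref{fourier} may be split and the change of variables performed while $D\balpha'(X)$ retains its distributional $\mathfrak{h}$-dependence, and that transversality keeps both integrals well defined, which it does since $f_{\balpha'} = g f_\balpha$ is nonvanishing whenever $g$ is invertible. Since only finitely many multi-indices $I$ survive once the form degree exceeds $\dim M$, the Taylor expansions involved are in fact finite, which removes any convergence concern and reduces the argument to the bookkeeping above.
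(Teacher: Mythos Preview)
Your proof is correct and follows essentially the same route as the paper: write the new frame as $g\balpha$, record $D\balpha'(X)=g\,D\balpha(X)+(dg)\balpha$, use the homogeneity of $\delta_0$ under linear changes to produce a factor $(\det g)^{-1}$, and then observe that the extra $(dg)\balpha$ contribution disappears once one wedges against $\alpha_k\wedge\cdots\wedge\alpha_1$. The only cosmetic difference is that the paper invokes the identity $\det(A)\,\delta_0(A\mathbf{x})=\delta_0(\mathbf{x})$ directly, whereas you derive it via the Fourier representation \eqref{fourier} and a change of variables; both are entirely adequate here, and your care in phrasing the vanishing of the $(dg)\balpha$ terms as a congruence modulo the ideal $\mathcal{I}$ generated by the $\alpha_j$ is a nice touch that makes explicit what the paper leaves to the reader.
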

\begin{proof}
Suppose that $\boldsymbol{\beta} = (\beta_1,\ldots ,\beta_k)$ is another frame for $E^0$ on $U$ defining the same orientation as $\balpha$.  Then we have $\boldsymbol{\beta} = A\balpha$ for some matrix $A$ with positive determinant, and so
\begin{align*}
\beta_k\wedge\cdots\wedge\beta_1\wedge\delta_0(D\boldsymbol{\beta}(X))& = \det(A)\alpha_k\wedge\cdots\wedge\alpha_1\wedge\delta_0(A(D\balpha(X)) + dA\wedge\balpha)\\
&=\alpha_k\wedge\cdots\wedge\alpha_1\wedge\delta_0(D\balpha(X) + A^{-1}(dA)\wedge\balpha)\\
&=\alpha_k\wedge\cdots\wedge\alpha_1\wedge\delta_0(D\balpha(X)),
\end{align*}
since $\det(A)\delta_0(A\mathbf{x}) = \delta_0(\mathbf{x})$ if $\det(A)>0$, and $\alpha_j\wedge\alpha_j = 0$ for all $j$.
\end{proof}
\begin{corollary}
There exists a well-defined equivariant differential form with generalized coefficients $\tilde{\mathcal{J}}(E,X)$ whose restriction to any trivializing neighbourhood $U\subset M$ is $\mathcal{J}_{\balpha}(E,X)$.
\end{corollary}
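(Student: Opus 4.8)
The plan is to realize $\tilde{\mathcal{J}}(E,X)$ by a standard sheaf-theoretic gluing, with Lemma \ref{l1} supplying the compatibility on overlaps. First I would choose an open cover $\{U_a\}$ of $M$ by trivializing neighbourhoods for $E^0$, and on each $U_a$ an oriented local frame $\balpha_a$ for $E^0|_{U_a}$; such frames exist precisely because $E^0$ is an oriented vector bundle, so it is locally trivial and admits local oriented frames. On each $U_a$ this produces the local form $\mathcal{J}_{\balpha_a}(E,X)\in\mathcal{A}^{-\infty}(\mathfrak{h},U_a)$, which is well-defined as a generalized form thanks to the transversality assumption built into the construction preceding the corollary.

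Next I would check compatibility on overlaps. On $U_a\cap U_b$ both $\balpha_a$ and $\balpha_b$ restrict to oriented frames for $E^0$ inducing the same globally fixed orientation, so they differ by a transition matrix of positive determinant. By Lemma \ref{l1} the associated local forms therefore coincide there:
\[
\mathcal{J}_{\balpha_a}(E,X)\big|_{U_a\cap U_b} = \mathcal{J}_{\balpha_b}(E,X)\big|_{U_a\cap U_b}.
\]
This is the only place the orientation hypothesis enters: it guarantees that the transition matrices between the chosen frames always have positive determinant, which is exactly the hypothesis of Lemma \ref{l1}.

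Finally I would invoke the \emph{sheaf} property. Since generalized functions—and hence equivariant differential forms with generalized coefficients—form a sheaf on $M$, a family of local sections agreeing on pairwise intersections glues uniquely to a global section $\tilde{\mathcal{J}}(E,X)\in\mathcal{A}^{-\infty}(\mathfrak{h},M)$ restricting to $\mathcal{J}_{\balpha_a}(E,X)$ on each $U_a$. Concretely, one may take $\tilde{\mathcal{J}}(E,X)=\sum_a \rho_a\,\mathcal{J}_{\balpha_a}(E,X)$ for a partition of unity $\{\rho_a\}$ subordinate to $\{U_a\}$; restricting to any $U_b$ and using the overlap identities wherever $\rho_a\neq 0$ collapses the sum to $\big(\sum_a\rho_a\big)\mathcal{J}_{\balpha_b}(E,X) = \mathcal{J}_{\balpha_b}(E,X)$, so the result is independent of all choices. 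The expected main obstacle is purely a matter of care: confirming that restriction of these distributional forms behaves as in the smooth case, so that the sheaf axioms genuinely apply. But since the Dirac-type coefficients are obtained by pulling back $\delta_0$ along the submersion $f_{\balpha}$, which localizes distributions, restriction is well-defined and the gluing goes through without incident.
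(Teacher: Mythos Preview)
Your proposal is correct and follows exactly the approach the paper intends: the corollary is stated without proof in the paper, as an immediate consequence of Lemma~\ref{l1}, and your argument simply spells out the standard gluing procedure that this lemma enables. There is nothing to add or correct.
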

\begin{proposition}
The form $\tilde{\mathcal{J}}(E,X)$ is equivariantly closed.
\end{proposition}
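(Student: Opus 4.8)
The plan is to argue locally, since equivariant closedness is a local condition: by the preceding Corollary, $\tilde{\mathcal{J}}(E,X)$ restricts to $\mathcal{J}_{\balpha}(E,X)$ on each trivializing neighbourhood $U$, so it suffices to show $D\mathcal{J}_{\balpha}(E,X) = 0$ for an oriented frame $\balpha$. I would work from the Fourier representation \eqref{fourier}, writing $\omega = \alpha_k\wedge\cdots\wedge\alpha_1$ so that $\mathcal{J}_{\balpha}(E,X) = \frac{1}{(2\pi)^k}\int_{(\R^k)^*}\omega\wedge e^{-i\langle\xi,D\balpha(X)\rangle}\,d\xi$, and then move the equivariant differential $D$ underneath the $\xi$-integral.

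The key algebraic observation is that the phase is equivariantly closed. For fixed $\xi$ we have $\langle\xi,D\balpha(X)\rangle = D\langle\xi,\balpha\rangle(X)$, so that $D\langle\xi,D\balpha(X)\rangle = D^2\langle\xi,\balpha\rangle(X) = 0$; since the phase has even degree, this gives $D\,e^{-i\langle\xi,D\balpha(X)\rangle} = 0$. Because $D$ is an odd derivation, it follows that $D(\omega\wedge e^{-i\langle\xi,D\balpha(X)\rangle}) = D\omega\wedge e^{-i\langle\xi,D\balpha(X)\rangle}$, and expanding $D\omega$ by the Leibniz rule produces a sum of terms in each of which exactly one factor $\alpha_j$ is replaced by the even form $D\alpha_j(X)$, the $j$-th component of $D\balpha(X)$.

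Next I would exploit that each such factor is a $\xi$-derivative of the phase: $D\alpha_j(X)\,e^{-i\langle\xi,D\balpha(X)\rangle} = i\,\partial_{\xi^j}e^{-i\langle\xi,D\balpha(X)\rangle}$, the commutation being free of signs because $D\alpha_j(X)$ has even degree. Moving $D\alpha_j(X)$ next to the exponential in each term and integrating over $(\R^k)^*$, every term becomes the integral of a total $\xi$-derivative and hence vanishes; this is the form-level incarnation of the identity $x_j\delta_0(x) = 0$. Summing over $j$ gives $D\mathcal{J}_{\balpha}(E,X) = 0$, whence $D\tilde{\mathcal{J}}(E,X) = 0$.

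The routine part is the algebra above; the step I expect to require real care is the analytic justification, namely that $D$ may be passed through the oscillatory integral \eqref{fourier} and that the integral of the total $\xi$-derivative genuinely vanishes. Both rest on the transversality hypothesis $f_{\balpha}(m)\neq 0$, which via Hörmander's calculus \cite{Hor} makes $\delta_0\circ f_{\balpha}$ and the integrals appearing here well-defined with no boundary contribution as $\xi\to\infty$; this is exactly the ``rapidly decreasing in $\mathfrak{h}$-mean'' property already invoked for the global form. As an alternative I could bypass the local frame and argue from \eqref{J}: on $T^*M$ the form $e^{iD\theta(X)}$ is equivariantly closed since $D(D\theta(X)) = 0$, and both $\imath^*$ and the fibre integration $q_*$ commute with $D$ (the latter again because the oscillatory integral has no boundary term along the non-compact fibres of $E^0$), giving $D\mathcal{J}(E,X) = (2\pi i)^{-k}q_*\imath^*D\,e^{iD\theta(X)} = 0$ directly.
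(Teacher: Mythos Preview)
Your proof is correct and follows essentially the same route as the paper: work locally, use that $\delta_0(D\balpha(X))$ is equivariantly closed so $D$ lands only on $\omega=\alpha_k\wedge\cdots\wedge\alpha_1$, expand by Leibniz, and kill each term via $u_j\delta_0(\mathbf{u})=0$. The only difference is presentation: the paper invokes the distributional identity $u_j\delta_0(\mathbf{u})=0$ directly in one line, whereas you unpack it through the Fourier representation \eqref{fourier} and integration by parts in $\xi$ (and you correctly note that this is the same identity); your added discussion of the analytic justification and the alternative global argument via $q_*\imath^*$ are sound but go beyond what the paper records.
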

\begin{proof}
In any local frame $\balpha$ we have
\begin{align*}
D\mathcal{J}_{\balpha}(E,X) & = (D(\alpha_k\cdots\alpha_1))\delta_0(D\balpha(X))\\
& = (\sum_{j=1}^k(-1)^{k-i}\alpha_k\cdots D\alpha_j(X)\cdots \alpha_1)\delta_0(D\balpha(X))\\
& = 0,
\end{align*}
thanks to the identity $u_j\delta_0(\mathbf{u}) = 0$ for $j=1\ldots k$.
\end{proof}
\begin{proposition}\label{pjnew}
We have the following equality of equivariant differential forms with generalized coefficients on $M$:
\begin{equation}
\tilde{\mathcal{J}}(E,X) = \mathcal{J}(E,X).\label{jtilde}
\end{equation}
\end{proposition}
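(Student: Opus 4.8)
The plan is to verify the identity locally on a trivialising neighbourhood $U$ for $E^0$. Since Lemma \ref{l1} and its corollary show that $\tilde{\mathcal{J}}(E,X)$ is already a globally well-defined form, while $\mathcal{J}(E,X)$ is defined intrinsically by \eqref{J}, it suffices to match the two over each such $U$. So I would fix an oriented frame $\balpha = (\alpha_1,\ldots,\alpha_k)$ for $E^0|_U$ and use it to trivialise the projection $q:E^0|_U\to U$, writing a covector in $E^0_m$ as $\sum_j \xi^j\alpha_j(m)$ with fibre coordinates $\xi = (\xi^1,\ldots,\xi^k)$. The first step is to compute $\theta_0 = \imath^*\theta$: evaluating the tautological $1$-form against a vector $V$ tangent to $E^0$ gives $\theta_0(V) = \sum_j \xi^j (q^*\alpha_j)(V)$, so $\theta_0 = \sum_j \xi^j\,\alpha_j$ (suppressing $q^*$). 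Differentiating, and computing the moment map from the fact that the fundamental field $X_{E^0}$ projects to $X_M$, one obtains
\begin{equation*}
\imath^*D\theta(X) = \sum_{j=1}^k \xi^j\,D\alpha_j(X) + \sum_{j=1}^k d\xi^j\wedge\alpha_j,
\end{equation*}
where the first sum assembles the base data into $D\balpha(X) = d\balpha + f_{\balpha}(X)$, and the second carries all the vertical differentials $d\xi^j$.

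Both summands are of even degree, so they commute in the exterior algebra and the exponential factors as
\begin{equation*}
\imath^*e^{iD\theta(X)} = \exp\!\Big(i\sum_{j}\xi^j D\alpha_j(X)\Big)\,\prod_{j=1}^k\big(1 + i\,d\xi^j\wedge\alpha_j\big),
\end{equation*}
where I have used $(d\xi^j\wedge\alpha_j)^2 = 0$ to truncate each vertical exponential at first order. The next step is to isolate the part proportional to the top vertical form $d\xi^1\wedge\cdots\wedge d\xi^k$, which is precisely what the fibre integral $q_*$ extracts. Only the term selecting $i\,d\xi^j\wedge\alpha_j$ from every factor of the product contributes. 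Reordering the $d\xi^j$ to the front incurs a sign $(-1)^{\binom{k}{2}}$, and reversing $\alpha_1\wedge\cdots\wedge\alpha_k$ into $\alpha_k\wedge\cdots\wedge\alpha_1$ incurs a second $(-1)^{\binom{k}{2}}$; these cancel, leaving the contribution
\begin{equation*}
i^k\,d\xi^1\wedge\cdots\wedge d\xi^k\wedge\alpha_k\wedge\cdots\wedge\alpha_1\,\exp\!\Big(i\sum_{j}\xi^j D\alpha_j(X)\Big).
\end{equation*}

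Performing $q_*$ then amounts to integrating the coefficient over the fibre $\R^k$, and here I would invoke the Fourier representation \eqref{fourier}: the oscillatory integral $\int_{\R^k}\exp(i\sum_j \xi^j D\alpha_j(X))\,d\xi$ equals $(2\pi)^k\,\delta_0(D\balpha(X))$, with the nilpotent parts $d\alpha_j$ generating exactly the Taylor series of $\delta_0(D\balpha(X))$. Collecting the prefactors, $(2\pi i)^{-k}\cdot i^k\cdot(2\pi)^k = 1$, so
\begin{equation*}
\mathcal{J}(E,X) = \alpha_k\wedge\cdots\wedge\alpha_1\wedge\delta_0(D\balpha(X)) = \mathcal{J}_{\balpha}(E,X),
\end{equation*}
which is $\tilde{\mathcal{J}}(E,X)$ on $U$, completing the identification.

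The delicate points here are bookkeeping rather than conceptual: tracking the reordering signs so that the answer lands on $\alpha_k\wedge\cdots\wedge\alpha_1$ with the correct orientation, and fixing the orientation convention on the fibre of $E^0$ so that the sign in \eqref{fourier} agrees with the one produced above (a harmless ambiguity, since $\int e^{\pm i\langle\xi,\cdot\rangle}d\xi$ yields the same Dirac mass either way). The one genuine analytic point that must be justified is that factoring the exponential and interchanging it with the fibre integration is legitimate at the level of generalized coefficients. This is exactly where the transversality hypothesis $T^*_HM\cap E^0 = 0$ enters: it guarantees, via H\"ormander's oscillatory-integral calculus, that $f_{\theta_0}^{-1}(0) = 0$ so that each manipulation converges and $q_*\imath^*e^{iD\theta(X)}$ is a well-defined distribution on $\mathfrak{h}$. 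I expect this justification, rather than the algebra, to be the step requiring the most care.
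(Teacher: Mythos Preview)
Your proof is correct and follows essentially the same route as the paper's: both restrict to a trivialising neighbourhood, express the pulled-back canonical $1$-form in terms of the frame $\balpha$ and fibre coordinates, expand $e^{iD\theta_0(X)}$, pick off the top-degree part in the vertical differentials, and then invoke the Fourier representation \eqref{fourier}. The paper's version is terser---it writes $\lambda=-\langle\xi,\balpha\rangle$ with the opposite sign convention and suppresses the bookkeeping of the $i^k$ and reordering signs that you track explicitly---but the argument is the same.
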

\begin{proof}
We prove \eqref{jtilde} by showing that it holds on any choice of trivializing neighbourhood $U$.  Let $N=U\times (\R^k)^*$ denote the trivialization of the open subset $q^{-1}(0)$, and let $\xi$ denote the coordinate on $(\R^k)^*$. Define a 1-form $\lambda$ on $N$ by  $\lambda = -<\xi,\balpha>$.   It follows from the definition of the canonical 1-form on $T^*M$ that $\lambda$ coincides with $\imath^*\theta|_N$ under the identification $N\cong q^{-1}(U)$.
Let $t^1,\ldots, t^k$ be the basis for $\R^k$ with respect to which we have $\balpha = \sum\alpha_jt^j$.  If $\xi = \sum \xi^jt_j$ with respect to the corresponding dual basis for $(\R^k)^*$, then we have $\lambda = -\sum\xi^j\alpha_j$, and thus
\begin{equation*}
D\lambda(X) = \alpha_1\wedge d\xi^1 + \cdots + \alpha_k\wedge d\xi^k - <\xi, d\balpha + f_{\balpha}(X)>,
\end{equation*}
whence
\begin{equation*}
e^{iD\lambda(X)} = \alpha_k\cdots\alpha_1 e^{-i(\xi,D\balpha(X))}d\xi_1\cdots d\xi_k.
\end{equation*}
Thus, using \eqref{fourier}, we have
\begin{equation*}
\mathcal{J}(E,X)|_{U} = q_{*}e^{iD\theta_0(X)}|_U = \int_{(\R^k)^*}e^{iD\lambda(X)} = \mathcal{J}_{\balpha}(E,X) = \tilde{\mathcal{J}}(E,X)|_U.
\end{equation*}
\end{proof}
This completes our local description of the form $\mathcal{J}(E,X)$.  However, for our proof of the index formula in the next section, we chose to use the recent index theorem of Paradan and Vergne for transversally elliptic operators, for which we will need the following:
\begin{theorem}\label{l2}
Let $\theta$ denote the canonical 1-form on $T^*M$, and define $P_{\theta}(X)$ according to Example \ref{eg3} above.  Then we have the following equality of differential forms with generalized coefficients on $M$:
\begin{equation}
q_*\imath^*P_{\theta}(X) = (2\pi i)^k\mathcal{J}(E,X) = q_*\imath^*e^{iD\theta(X)}.\label{fibint}
\end{equation}
\end{theorem}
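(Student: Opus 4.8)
The right-hand equality in \eqref{fibint} is exactly the definition \eqref{J} of $\mathcal{J}(E,X)$, so the content of the theorem is the single identity $q_*\imath^* P_\theta(X) = q_*\imath^* e^{iD\theta(X)}$. The plan is to exhibit $P_\theta$ and $e^{iD\theta(X)}$ as two representatives of the same equivariant class via an explicit primitive, and then to show that this primitive contributes nothing after pulling back to $E^0$ and integrating over the fibres.

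First I would record that both forms are equivariantly closed on $T^*M$: since $\theta$ is $H$-invariant we have $D(D\theta(X))=0$, so $e^{iD\theta(X)}$ is closed, while $P_\theta$ is closed by Example \ref{eg3}. Moreover each differs from the constant form $1$ by an explicit exact term. Writing $F(t)=e^{itD\theta(X)}$ and using $D(\theta F(t)) = D\theta(X)F(t) = \tfrac1i\tfrac{d}{dt}F(t)$, integration in $t$ gives
\[
e^{iD\theta(X)} = 1 + D\eta(X), \qquad \eta(X) = i\theta\int_0^1 e^{itD\theta(X)}\,dt,
\]
whereas Example \ref{eg3} gives $P_\theta(X) = 1 + D\big((\chi-1)\beta(\theta)(X)\big)$. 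Subtracting, $e^{iD\theta(X)} - P_\theta(X) = D\gamma(X)$ for the explicit form $\gamma(X) = \eta(X) - (\chi-1)\beta(\theta)(X)$, which is globally defined on $T^*M$ because $\chi\equiv 1$ on a neighbourhood of $f_\theta^{-1}(0)$ forces the singular factor $\beta(\theta)$ to be multiplied by $0$ there.

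Next I would apply $\imath^*$ and $q_*$. Pullback commutes with the equivariant differential, and one checks that $\imath^*\beta(\theta)(X)=\beta(\theta_0)(X)$ and $\imath^* e^{iD\theta(X)} = e^{iD\theta_0(X)}$ with $\theta_0=\imath^*\theta$, so $\imath^*\gamma$ is assembled from $\theta_0$ exactly as $\gamma$ is from $\theta$. The geometric input is transversality in the form $f_{\theta_0}^{-1}(0)=E^0\cap T^*_HM = 0$: as in the discussion following \eqref{J}, this guarantees that $e^{iD\theta_0(X)}$, and likewise $\imath^*\gamma$, are ``rapidly decreasing in $\mathfrak{h}$-mean'' along the fibres of $E^0$, so that the oscillatory fibre integrals converge in the sense of \cite{Hor} and $q_*$ commutes with $D$ with no boundary term at fibre infinity. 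Hence $q_*\imath^*(e^{iD\theta(X)}-P_\theta(X)) = (-1)^k\,D\,q_*\imath^*\gamma(X)$, and it remains to see that this vanishes. The cleanest way to finish is to pass to a local oriented frame $\balpha$ and the trivialisation $E^0|_U\cong U\times(\R^k)^*$ of Proposition \ref{pjnew}, in which $\theta_0 = -\langle\xi,\balpha\rangle$; there $q_*\imath^* e^{iD\theta(X)}$ was already identified with $(2\pi i)^k\mathcal{J}(E,X)$, and one evaluates $q_*\imath^* P_\theta(X)$ by the same Fourier representation \eqref{fourier} together with the Chern--Weil computation of Remark \ref{cwf}, obtaining the identical expression and therefore the exact equality.

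The main obstacle is analytic rather than formal. The fibre integrals of $P_\theta$ and of the primitive $\gamma$ are neither compactly supported nor classically integrable along the (conic) fibres of $E^0$, and everything hinges on upgrading the transversality hypothesis to the precise decay that makes these integrals well defined, kills the boundary contribution at fibre infinity, and forces the exact correction to integrate to $0$ rather than merely to a $D$-exact form. Carrying this out rigorously --- equivalently, checking in the frame $\balpha$ that the cutoff-dependent contributions from $\chi$ and $\beta(\theta)$ cancel exactly --- is the crux of the argument.
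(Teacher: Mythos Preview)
Your transgression argument is a reasonable first move, but it only yields equality in $\mathcal{H}^{-\infty}(\mathfrak{h},M)$: the identity $q_*\imath^*D\gamma(X) = (-1)^k D\,q_*\imath^*\gamma(X)$ tells you the difference of the two fibre integrals is $D$-exact on $M$, not that it is zero. You acknowledge this, but your proposed fix---``evaluate $q_*\imath^*P_\theta(X)$ by the same Fourier representation \eqref{fourier} together with the Chern--Weil computation of Remark~\ref{cwf}''---does not work as stated. Remark~\ref{cwf} (and Lemma~(\ref{cweq})) applies to the special situation of a principal $H$-bundle with connection, where $E^0\cong N\times\mathfrak{h}^*$ is trivialised by the connection form; it does not compute $q_*\imath^*P_\theta$ for an arbitrary co-oriented distribution, and no part of your write-up supplies that computation.

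The paper does not go through a cohomological argument at all. It simply computes $q_*\imath^*P_\theta(X)$ directly in a local frame $\balpha$, writing $P_\lambda(X)=\chi-id\chi\wedge\lambda\int_0^\infty e^{itD\lambda(X)}\,dt$ with $\lambda=-\langle\xi,\balpha\rangle$, extracting the term of top degree in $d\xi^1,\dots,d\xi^k$, and then making the change of variables $\zeta^j=t\xi^j$. Under that substitution the expression $\displaystyle\sum_j\frac{\partial\chi}{\partial\xi^j}\Big(\frac{\zeta}{t}\Big)\zeta^j\,\frac{1}{t}$ becomes $-\dfrac{d}{dt}\chi(\zeta/t)$, so the $t$-integral collapses to $1$ and the cutoff disappears exactly, leaving $i^k\alpha_k\cdots\alpha_1\,e^{-i\langle\zeta,D\balpha(X)\rangle}\,d\zeta^1\cdots d\zeta^k$; integrating over $(\R^k)^*$ and invoking \eqref{fourier} gives $(2\pi i)^k\mathcal{J}_{\balpha}(E,X)$ on the nose. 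This scaling trick is the substantive step that turns the estimate-laden oscillatory-integral issue you flag into a one-line identity, and it is absent from your proposal.
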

\begin{proof}
Let $\lambda$ denote the 1-form defined on $N=U\times (\R^k)^*$ as above, and define $P_\lambda(X)$ as in (\ref{Plam}), with $\chi$ an arbitrarily chosen cutoff funtion supported on a neighbourhood of $U\times\{0\}$ in $N$.
As in the proof of Proposition \ref{pjnew}, we have
\begin{equation*}
D\lambda(X) = \alpha_1\wedge d\xi^1 + \cdots + \alpha_k\wedge d\xi^k - <\xi, d\balpha + f_{\balpha}(X)>.
\end{equation*}
Let $\chi(\xi)$ be any arbitrary cutoff function supported on an open neighbourhood of $U\times \{0\}$ in $N$.
The contribution to the integral of $P_\lambda$ over $(\R^k)^*$ comes from the term of maximum degree in the $d\xi^j$.
We have
\begin{equation*}
P_\lambda(X) = \chi(\xi) - id\chi(\xi)\wedge\lambda \int^\infty_0 e^{itD\lambda(X)} dt,
\end{equation*}
where $\displaystyle d\chi(\xi) = \sum \frac{\partial \chi}{\partial \xi^i}(\xi) d\xi^i$, and
\begin{align*}
e^{it<\balpha, d\xi>} &=\prod_{j=1}^k (1+it \alpha_j\wedge d\xi^j)\\ 
&=(it)^k\alpha_1\wedge d\xi^1\wedge\cdots\wedge\alpha_k d\xi^k + (it)^{k-1}\sum_{i=1}^k \alpha_1\wedge d\xi^1\wedge\cdots\wedge\widehat{\alpha_j\wedge d\xi^j}\wedge\cdots\wedge \alpha_k \wedge d\xi^k\\
&+\quad\text{terms of lower degree.}
\end{align*}
We are thus interested in the top-degree part of $d\chi(\xi)\wedge\lambda \,e^{it<\balpha,d\xi>}$, which is given by
\begin{align*}
&\left(\sum_{i=1}^k \frac{\partial \chi}{\partial \xi^i}d\xi^i\right)\left(-\sum_{j=1}^k\xi^j\alpha_j\right)\left((it)^{k-1}\sum_{l=1}^k \alpha_1\wedge d\xi^1\wedge\cdots\wedge\widehat{\alpha_l\wedge d\xi^l}\wedge\cdots\wedge \alpha_k\wedge d\xi^k\right)\\
=&-(it)^{k-1}\left(\sum_{j=1}^k \frac{\partial \chi}{\partial \xi^j}\xi^j\right)\alpha_1\wedge d\xi^1\wedge\cdots\wedge\alpha^k \wedge d\xi^k.
\end{align*}
If we make the change of variables $\zeta^i = t\xi^i$, then the top-degree part of $-id\chi\wedge\lambda\int^\infty_0 e^{itD\lambda(X)}$ becomes
\begin{align*}
&i^k\int^\infty_0 \left(\frac{1}{t}\sum_{j=1}^k\frac{\partial \chi}{\partial \zeta^j}\left(\frac{\zeta}{t}\right)\zeta^j\right)\alpha_1\wedge d\zeta_1\cdots\alpha_k\wedge d\zeta^k e^{-i<\zeta, D\balpha(X)>}\\
=&i^k\int^\infty_0 \frac{d\phantom{t}}{dt}\left(-\chi\left(\frac{\zeta}{t}\right)\right)\alpha_k\cdots\alpha_1 e^{-i<\zeta, D\alpha(X)>}d\zeta^1\cdots d\zeta^k\\
=&i^k\alpha_k\cdots\alpha_1 e^{-<\zeta,D\balpha(X)>}d\zeta^1\cdots d\zeta^k.
\end{align*}
Integrating over $(\R^k)^*$ and using \eqref{fourier}, we obtain our result.
\end{proof}
\begin{remark}
We should remark that Theorem \ref{l2} is proved in a more general setting in \cite[Section 2.4]{P1}: Let $\mathcal{V}\rightarrow M$ be an $H$-equivariant vector bundle with compact base $M$, and denote by $\mathcal{H}^{-\infty}_c(\mathfrak{h},\mathcal{V})$ the space of equivariant differential forms on $\mathcal{V}$ with compact support.  Paradan shows that integration along the fibres of $\mathcal{V}$ defines a morphism $\mathcal{H}^{-\infty}_c(\mathfrak{h},\mathcal{V})\rightarrow \mathcal{H}^{\infty}(\mathfrak{h},M)$ that extends to equivariant differential forms that are rapidly decreasing in $\mathfrak{h}$-mean (such forms are defined in \cite{BV1}).

If $\lambda\in\mathcal{A}^1(\mathcal{V})$ is an invariant 1-form such that $f_\lambda^{-1}(0) = M$, then $P_\lambda(X)$ defines a class in $\mathcal{H}^{-\infty}_c(\mathfrak{h},\mathcal{V})$, while the form $e^{-iD\lambda(X)}$ is rapidly decreasing in $\mathfrak{h}$-mean.  Proposition 2.9 of \cite{P2} states that integration of either form over the fibres of $\mathcal{V}$ results in the same cohomology class in $\mathcal{H}^{-\infty}(\mathfrak{h},M)$.  While our result follows directly from this Proposition, we include the above proof since we are able to give an explicit local calculation on the level of differential forms.
\end{remark}

\begin{remark}
Let us consider the $H\times G$ equivariant form $\mathcal{J}(E,(X,Y))$ in the setting of Remark \ref{cwf}, 
where $E\subset TN$ is the space of horizontal vectors with respect to the 
connection 1-form $\theta$.
By (\ref{cweq}), the form $\mathcal{J}(E,(X,Y))$ is given by:
\begin{equation*}
\mathcal{J}(E,(X,Y)) = \delta(X-\Psi(Y))\frac{\psi_r\cdots\psi_1}{\vol(H,dX)},
\end{equation*}
and integrating over the fibres of $\pi:N\rightarrow M$, gives
\begin{equation}\label{cw2}
\pi_*\mathcal{J}(E,(X,Y)) = \delta_0(X-\Psi(Y)).
\end{equation}
Now, the coefficients of the above equivariant differntial form (\ref{cw2}) are generalized functions on $\mathfrak{h}$ supported at the origin, whence the pairing of this form against a smooth function of arbitrary support is well-defined.
Thus, given any invariant $f\in C^{\infty}(\mathfrak{h})$, we find using (\ref{cw2}) that
\begin{equation*}
\int_{\mathfrak{h}}\pi_*\mathcal{J}(E,(X,Y))f(X)dX = f(\Psi(Y)).
\end{equation*}
As a result, the equivariant Chern-Weil characteristic forms can be obtained by pairing invariant polynomials on $\mathfrak{h}$ against the fibre integral of $\mathcal{J}(E,(X,Y))$.
\end{remark}

\section{Index formulas}
\subsection{The general formula}
We wish to consider the following situation: suppose $E\subset TM$ is a sub-bundle of even rank, and that a compact Lie group $H$ acts on $M$ transverse to $E$.  We suppose given $H$-equivariant Hermitian vector bundles $\W^\pm\rightarrow M$, and an $H$-equivariant morphism $\sigma:\pi^*\W^+\rightarrow \pi^*\W^-$, where $\pi:T^*M\rightarrow M$ is the projection mapping.  We suppose that our symbol $\sigma$ ``depends only on $E^*$'' in the following sense: we have the exact sequence of vector bundles

\[
\xymatrix{0\ar[r] &E^0 \ar@{^{(}->}[r]^\imath &T^*M\ar@{->>}[r]^{r} &E^*\ar[r] &0,}
\]
and $\sigma$ is such that $\sigma = r^*\sigma_E$, for some symbol $\sigma_E:s^*\W^+\rightarrow s^*\W^-$, where $s:E^*\rightarrow M$ denotes projection onto $M$.

We note that if $\sigma_E$ is an elliptic symbol on $E^*$, then the support of $\sigma$ is $E^0$, and the transversality condition on the action of $H$ gives $E^0\cap T^*_HM = 0$, which means that $\sigma$ is $H$-transversally elliptic, in the sense of Atiyah \cite{AT}.
We may then compute its equivariant index using the formula of Paradan-Vergne \cite{PV3}:
\begin{equation}
\ind^H(\sigma)(he^Y) = \int_{T^*M(h)}(2\pi i)^{-\dim M(h)}\label{pv} \frac{\hat{A}^2(M(h),Y)}{D_h(\mathcal{N},Y)}\Ch(\sigma,h)(Y)P_{\theta^h}(Y).
\end{equation}
Let us recall the definitions of the terms appearing in the above formula. The 1-form $\theta^h$ is simply the restriction of the canonical 1-form on $T^*M$ to $T^*M(h)$.  The class $\Ch(\sigma,h)$ is the ``Chern character with support'' defined by Paradan and Vergne \cite{PV1,PV2,PV3}.  It is supported on $\supp(\sigma)$, and constructed using the Chern character
\begin{equation*}
\Ch_h(\mathbb{A},\sigma)(Y) = \Str(h^\W e^{\mathbb{F}_\sigma(Y)}),
\end{equation*}
where $\mathbb{A}$ is a superconnection on the $\mathbb{Z}_2$-graded bundle $\W = \W^+\oplus \W^-$ with no term in exterior degree zero, and $\mathbb{F}_\sigma(Y)$ is the equivariant curvature of the superconnection $\mathbb{A} + i\begin{pmatrix} 0 & \sigma \\ \sigma^* & 0\end{pmatrix}$.  We recall that the cohomology class of $\Ch_h(\mathbb{A},\sigma)$ depends only on the symbol $\sigma$ (see \cite{BV1}, Section 4.5), and note that this class coincides with that of $\Ch(\sigma,h)$ in an appropriate cohomology space \cite{PV1,PV2}.

Let $\nabla$ be an $H$-equivariant connection on $TM$. For any $h\in H$, we have the decomposition
\begin{equation*}
TM|_{M(h)} = TM(h)\oplus\mathcal{N}.
\end{equation*}
The connection $\nabla$ induces connections $\nabla^0$ and $\nabla^1$ on $TM(h)$ and $\mathcal{N}$, respectively, with corresponding equivariant curvatures $R_0(Y)$ and $R_1(Y)$, for $Y\in\mathfrak{h}(h)$.
We obtain the smooth, closed $H(h)$-equivariant forms on $M(h)$ defined by
\begin{equation*}
\hat{A}^2(M(h),Y) = \det\left(\frac{R_0(Y)}{e^{R_0(Y)/2}-e^{-R_0(Y)/2}}\right),
\end{equation*}
for $Y\in\mathfrak{h}(h)$ sufficiently small, and
\begin{equation*}
D_h(\mathcal{N},Y) = \det\left(1-h^{\mathcal{N}}e^{R_1(Y)}\right),
\end{equation*}
where $h^{\mathcal{N}}$ denotes the linear action induced by $h$ on $\mathcal{N}$.

\begin{remark}
We are allowing an abuse of notation in our statement of the formula (\ref{pv}) above.  The $H$-equivariant index of $\sigma$ is an $H$-invariant generalized function on $H$.  The right hand side of the above formula in fact defines an $H$-invariant generalized function $\ind^H(\sigma)_h(Y)$ on the tubular neighbourhood $H\times_{H(h)}\mathcal{U}_h\hookrightarrow H$, where $\mathcal{U}_h$ is an open $H(h)$-invariant neighbourhood of 0 in $\mathfrak{h}(h)$.  For smooth functions the equality $f(he^Y) = f_h(Y)$ follows from the localization formula in equivariant cohomology, but in the case of generalized functions one must carefully check compatibility conditions using the descent method of Duflo-Vergne \cite{DV}.  When the index formula of Berline and Vergne is used, this checking was done in \cite{BV1,BV2}.  Paradan and Vergne solve this problem in \cite{PV3} by proving a localization formula in equivariant cohomology with generalized coefficients that allows the right-hand sides of (\ref{pv}) to be patched together to give a generalized function on $H$.
\end{remark}

Our goal is to compute the pushforward of the formula (\ref{pv}) on $T^*M$ to obtain a formula as an integral over $M$.  Using a transgression argument similar to that in \cite[Proposition 3.38]{PV2} (see also \cite[Proposition 3.11]{P1}, \cite[Proposition 2.6]{P2}), we have:

\begin{proposition}\label{propa}
If the 1-forms $\alpha, \beta\in \mathcal{A}(T^*M)$ agree on $\supp(\sigma)$, then the following equality holds in $\mathcal{H}^{-\infty}(\mathfrak{h},T^*M)$:
\begin{equation}
\Ch(\sigma)(Y)P_{\alpha}(Y) = \Ch(\sigma)(Y)P_{\beta}(Y).
\end{equation}
\end{proposition}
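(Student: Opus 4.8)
The plan is to prove the identity by a transgression argument along a linear homotopy of $1$-forms, following the method of \cite[Proposition 3.38]{PV2} (see also \cite[Proposition 3.11]{P1} and \cite[Proposition 2.6]{P2}). The point to keep in mind is that $P_\alpha(Y)$ and $P_\beta(Y)$ both represent the class $1\in\mathcal{H}^{-\infty}(\mathfrak{h},T^*M)$ by Example \ref{eg3}, so the content of the statement is not that $P_\alpha$ and $P_\beta$ are cohomologous (they always are), but that multiplication by the fixed closed class $\Ch(\sigma)(Y)$ descends to cohomology \emph{within the class of products that remain well-defined as forms with generalized coefficients}. The hypothesis $\alpha|_{\supp(\sigma)}=\beta|_{\supp(\sigma)}$ is precisely what secures this.

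First I would set $\lambda_t=(1-t)\alpha+t\beta$ for $t\in[0,1]$, so that $\lambda_0=\alpha$, $\lambda_1=\beta$, and the variation $\dot\lambda_t=\beta-\alpha$ is independent of $t$ and vanishes on $\supp(\sigma)$. Because each $\lambda_t$ restricts to the common value $\alpha|_{\supp(\sigma)}$, the associated moment map $f_{\lambda_t}$ restricts to a $t$-independent map on $\supp(\sigma)$; in particular $f_{\lambda_t}^{-1}(0)\cap\supp(\sigma)$ is independent of $t$. Since these zero sets then stay inside a fixed neighbourhood along the homotopy, I may choose a single invariant cutoff $\chi$ that equals $1$ near $f_{\lambda_t}^{-1}(0)$ for every $t$, so that $P_{\lambda_t}(Y)=\chi+d\chi\wedge\beta(\lambda_t)(Y)$ is defined with the same $\chi$ throughout. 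This $t$-independence is the key geometric consequence of the hypothesis, and it is what will let the oscillatory integrals below converge uniformly in $t$.

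Next I would invoke the transgression formula for the Paradan form: differentiating $P_{\lambda_t}(Y)$ in $t$ yields $\tfrac{d}{dt}P_{\lambda_t}(Y)=D\gamma_t(Y)$, where the transgression form $\gamma_t$ is built from $\dot\lambda_t=\beta-\alpha$ and $\beta(\lambda_t)(Y)$ (this is the generalized-coefficient analogue of the homotopy computation used in Example \ref{eg3} to show $[P_\lambda]=1$). Since $\Ch(\sigma)(Y)$ is $D$-closed, $t$-independent, and of even degree, this gives
\begin{equation*}
\frac{d}{dt}\bigl(\Ch(\sigma)(Y)P_{\lambda_t}(Y)\bigr)=\Ch(\sigma)(Y)\,D\gamma_t(Y)=D\bigl(\Ch(\sigma)(Y)\gamma_t(Y)\bigr),
\end{equation*}
and integrating from $t=0$ to $t=1$ yields
\begin{equation*}
\Ch(\sigma)(Y)P_\beta(Y)-\Ch(\sigma)(Y)P_\alpha(Y)=D\Bigl(\int_0^1\Ch(\sigma)(Y)\gamma_t(Y)\,dt\Bigr),
\end{equation*}
which is $D$-exact and hence proves the claim $-$ \emph{provided} the right-hand primitive is a genuine equivariant form with generalized coefficients.

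Establishing that last provision is where I expect the main obstacle to lie. The product $\Ch(\sigma)(Y)\gamma_t(Y)$ is a product of two objects that are distributional in $Y$: $\Ch(\sigma)$ is supported on $\supp(\sigma)$, while $\gamma_t$ carries the singularity of $\beta(\lambda_t)$ along $f_{\lambda_t}^{-1}(0)$, and one cannot multiply arbitrary generalized-coefficient forms. I would therefore verify, in the sense of H\"ormander's wave-front calculus \cite{Hor} $-$ equivalently, the ``rapidly decreasing in $\mathfrak{h}$-mean'' condition of \cite{BV1} used throughout Section \ref{jsect} $-$ that the product is defined and that its $t$-integral converges. This is exactly the step secured by the second paragraph: since $f_{\lambda_t}^{-1}(0)\cap\supp(\sigma)$ is $t$-independent, the singular support of $\beta(\lambda_t)(Y)$ meets $\supp(\sigma)$ in the same set for every $t$, so the oscillatory integrals defining $\gamma_t$ pair admissibly with $\Ch(\sigma)$ uniformly in $t$, and the $t$-integral makes sense as a generalized-coefficient form. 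I would model this verification on the explicit local computation in the proof of Theorem \ref{l2}, where the analogous convergence and well-definedness are carried out by hand.
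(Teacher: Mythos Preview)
Your transgression argument along the linear homotopy $\lambda_t=(1-t)\alpha+t\beta$ is exactly the method the paper invokes: the paper does not give its own proof but simply cites \cite[Proposition 3.38]{PV2}, \cite[Proposition 3.11]{P1}, and \cite[Proposition 2.6]{P2} for precisely this argument, and your sketch reconstructs it faithfully through the identity $\Ch(\sigma)P_\beta-\Ch(\sigma)P_\alpha=D\bigl(\int_0^1\Ch(\sigma)\gamma_t\,dt\bigr)$.

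There is, however, a misconception in your final two paragraphs that you should correct. The form $\Ch(\sigma)(Y)$ is \emph{smooth} in $Y$, not distributional: it is built from $\Str(e^{\mathbb{F}_\sigma(Y)})$ and a smooth transgression term (see the construction recalled just before Theorem~\ref{rrf} and \cite{PV1,PV2}). Hence there is no wave-front-set obstruction to multiplying $\Ch(\sigma)(Y)$ by the generalized-coefficient form $\gamma_t(Y)$; a smooth equivariant form times a generalized-coefficient form is always defined. The genuine issue is purely one of \emph{spatial} support: the primitive $\beta(\lambda_t)(Y)$ lives only on $T^*M\setminus f_{\lambda_t}^{-1}(0)$, so one must arrange that the product $\Ch(\sigma)\,d\chi\,\beta(\lambda_t)$ is supported there. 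Your observation that $f_{\lambda_t}^{-1}(0)\cap\supp(\sigma)$ is $t$-independent (because $\lambda_t|_{\supp(\sigma)}=\alpha|_{\supp(\sigma)}$) is exactly the right ingredient; combined with the compact support of $\Ch(\sigma)$ near $\supp(\sigma)$, it lets you choose a single cutoff $\chi$ equal to $1$ on a neighbourhood of this fixed intersection so that $\Ch(\sigma)\,d\chi$ is supported away from $f_{\lambda_t}^{-1}(0)$ for every $t$. Once you reframe the well-definedness this way, the appeal to H\"ormander's calculus and ``rapid decrease in $\mathfrak{h}$-mean'' becomes unnecessary for the product itself (though it still underlies the definition of each $\beta(\lambda_t)$).
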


Now, let $\W = \W^+\oplus \W^-$ be a $\mathbb{Z}_2$-graded $H$-equivariant Hermitian vector bundle, and suppose $\sigma_E: s^*\W^+\rightarrow s^*\W^-$ is an $H$-invariant elliptic symbol on $E^*\subset T^*M$.   We let $s$ continue to denote the restriction of $s$ to $T^*M(h)$.  If we let $\sigma = r^*\sigma_E$, then we have:
\begin{theorem}
If the Lie group $H$ acts on $M$ transverse to the distribution $E\subset TM$, then the symbol $\sigma = r^*\sigma_E$ is transversally elliptic, and
the $H$-equivariant index of $\sigma$ is the generalized function on $H$ whose germ at $h\in H$ is given, for $Y\in\mathfrak{h}(h)$ sufficiently small, by
\begin{equation}
\ind^H(\sigma)(he^Y) = \int_{M(h)}(2\pi i)^{-\rank E(h)}\frac{\hat{A}^2(M(h),Y)}{D_h(\mathcal{N},Y)}\mathcal{J}(E(h),Y)s_*\Ch(\sigma_E,h)(Y).\label{main}
\end{equation}
\end{theorem}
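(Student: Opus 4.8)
The plan is to start from the Paradan--Vergne formula \eqref{pv}, which expresses $\ind^H(\sigma)(he^Y)$ as an integral over $T^*M(h)$ involving $\Ch(\sigma,h)(Y)$ and $P_{\theta^h}(Y)$, and to push this integral forward along the projection $\pi:T^*M(h)\to M(h)$ so as to obtain an integral over $M(h)$. The key observation driving the whole computation is that the symbol $\sigma = r^*\sigma_E$ factors through $E^*$, so its support is exactly $E^0$ and its Chern character with support is a pullback: $\Ch(\sigma,h)(Y) = r^*\Ch(\sigma_E,h)(Y)$, with $r:T^*M(h)\to E^*(h)$ the fibrewise projection dual to the inclusion $E^0(h)\hookrightarrow T^*M(h)$. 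Note that Proposition~\ref{fix} guarantees that the fixed-point submanifold $M(h)$ with the residual $H(h)$-action is again transverse to $E(h)$, so every object in \eqref{main} is well-defined on $M(h)$.

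**First I would** reduce the relevant part of the integrand to the support $E^0(h)$ of $\sigma$. The product $\Ch(\sigma,h)(Y)P_{\theta^h}(Y)$ is computed in $\mathcal{H}^{-\infty}(\mathfrak{h},T^*M(h))$; since $\Ch(\sigma,h)$ is supported on $E^0(h)$, Proposition~\ref{propa} lets me replace the canonical $1$-form $\theta^h$ by any $1$-form agreeing with it along $E^0(h)$. The natural choice is a $1$-form pulled back from the $E^0(h)$-factor, so that $P_{\theta^h}(Y)$ may be replaced, up to the same cohomology class, by the pullback of the form $P_{\theta_0}(Y)$ built on the total space of $E^0(h)$. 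This is the step that converts the intractable $P_{\theta^h}$ on all of $T^*M(h)$ into something whose fibre integral is controlled by Theorem~\ref{l2}.

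**The heart of the argument** is then a Fubini-type splitting of the fibre integral over $T^*M(h) \cong E^0(h)\oplus E^*(h)$ (using the exact sequence $0\to E^0\to T^*M\to E^*\to 0$, restricted to $M(h)$ and split by the equivariant metric). Since $\Ch(\sigma,h)(Y)=r^*\Ch(\sigma_E,h)(Y)$ depends only on the $E^*(h)$-variable while $P_{\theta_0}(Y)$ has been arranged to depend only on the $E^0(h)$-variable, the fibre integral over $T^*M(h)$ factors as an integral over $E^*(h)$ of $\Ch(\sigma_E,h)(Y)$ — producing $s_*\Ch(\sigma_E,h)(Y)$ — times an integral over the fibres of $E^0(h)$ of $\imath^*P_{\theta_0}(Y)$. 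By Theorem~\ref{l2}, the latter evaluates precisely to $(2\pi i)^{k}\mathcal{J}(E(h),Y)$, where $k=\rank E^0(h)$. It remains only to account for the normalizing powers of $(2\pi i)$: the factor $(2\pi i)^{-\dim M(h)}$ in \eqref{pv}, combined with $(2\pi i)^{k}$ from Theorem~\ref{l2} and the fact that $\dim M(h) = \rank E(h) + k$, collapses to the prefactor $(2\pi i)^{-\rank E(h)}$ in \eqref{main}; the forms $\hat{A}^2(M(h),Y)/D_h(\mathcal{N},Y)$, being pulled back from $M(h)$, pass through the fibre integral untouched.

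**The main obstacle** I anticipate is the rigorous justification of the Fubini splitting at the level of equivariant forms with \emph{generalized} coefficients: one cannot naively integrate a product of distributional forms, and the interchange is legitimate only because $\Ch(\sigma_E,h)$ is rapidly decreasing in $\mathfrak{h}$-mean along $E^*(h)$ (ellipticity of $\sigma_E$) while $\imath^*P_{\theta_0}$ has compact support along $E^0(h)$, so exactly one factor is compactly supported in its fibre direction and the other is Schwartz-type. This is precisely the regularity condition under which Paradan's pushforward morphism (invoked in the remark after Theorem~\ref{l2}) is defined, and verifying that the transversality hypothesis \eqref{transverse}, restricted to $M(h)$ via Proposition~\ref{fix}, supplies the needed decay along the $E^0(h)$-fibres is where the technical care must go. Once the splitting is justified, the rest is the bookkeeping of characteristic classes and normalization constants described above.
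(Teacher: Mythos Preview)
Your proposal is correct and follows essentially the same route as the paper's proof: both start from the Paradan--Vergne formula \eqref{pv}, use Proposition~\ref{fix} to ensure everything is well-defined on $M(h)$, invoke Proposition~\ref{propa} to replace $P_{\theta^h}$ by the pullback along $p:T^*M(h)\to E^0(h)$ of the restricted form, and then apply Theorem~\ref{l2} to evaluate the $E^0(h)$-fibre integral as $(2\pi i)^{\rank E^0(h)}\mathcal{J}(E(h),Y)$. The paper packages your ``Fubini splitting'' as the base-change identity $r_*p^* = s^*q_*$ in the commutative square with vertices $T^*M(h)$, $E^0(h)$, $E^*(h)$, $M(h)$, together with the projection formula $r_*(r^*\alpha\cdot\beta)=\alpha\cdot r_*\beta$; this sidesteps your stated ``main obstacle'' entirely, since no product of two genuinely distributional forms ever appears---one factor is always a pullback and the pushforward is handled by standard fibre-integration identities rather than by a direct Fubini argument.
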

\begin{proof}
Since the symbol $\sigma$ is the pullback to $T^*M$ of the elliptic symbol $\sigma_E$ on $E^*$, we have $\Ch(\sigma,h) = r^*(\Ch(\sigma_1,h))$. Denote by $\theta^h$ the restriction of the canonical 1-form $\theta$ to $T^*M(h)$.
The restriction of $\sigma_E$ to $E^*(h)$ is again elliptic, and $\sigma|_{T^*M(h)}$ has support $E^0(h)$. 
By Proposition \ref{fix}, we know that the action of $H(h)$ on $M(h)$ is  
transverse to $E(h)\subset TM(h)$, whence the restriction of $\sigma$ to $T^*M(h)$ is transversally elliptic.  Moreover, since the action of $H(h)$ is transverse to $E(h)$, the form $\mathcal{J}(E(h),Y)$ is well-defined as an $H(h)$-equivariant differential form with generalized coefficients on $M(h)$.
We choose an $H$-equivariant splitting $T^*M = E^*\oplus E^0$, giving us the commutative diagram
\[
\xymatrix{ &T^*M(h) \ar[ld]_{p} \ar[dd]^{\pi} \ar[rd]^{r} &\\
	E^0(h) \ar[rd]_{q} & &E^*(h) \ar[ld]^{s}\\
	&M(h)&}
\] 
Since $\theta^h$ and $\imath^*\theta^h$ agree on $E^0(h)$, we may use Proposition \ref{propa} to obtain
\begin{equation*}
\Ch(\sigma,h)(Y)P_{\theta^h}(Y) = r^*\Ch(\sigma_E,h)(Y)p^*\imath^*P_{\theta^h}(Y)
\end{equation*}
Using Theorem \ref{l2} and the commutative diagram, we see that
\begin{equation*}
r_*p^*\imath^*P_{\theta^h}(Y) = s^*q_*\imath^*P_{\theta^h}(Y)) = (2\pi i)^{\rank E^0(h)}s^*\mathcal{J}(E(h),Y),
\end{equation*}
and thus,
\begin{align*}
\pi_*\Ch(\sigma, h)(Y)P_{\theta^h}(Y) &= s_*r_*\left(r^*\Ch(\sigma_E,h)(Y)p^*\imath^*P_{\theta^h}(Y)\right)\\
&=s_*\left(\Ch(\sigma_E,h)(Y)r_*p^*\imath^*P_{\theta^h}(Y)\right)\\
&=(2\pi i)^{\rank E^0} s_*\left(\Ch(\sigma_E,h)(Y)s^*\mathcal{J}(E(h),Y)\right).
\end{align*}
By integrating over the fibres in \eqref{pv} and substituting the above, the result follows.
\end{proof}

\begin{remark}
Suppose $\mathcal{V}$ is some $H$-equivariant vector bundle on $M$, and consider the symbol $\sigma_{\mathcal{V}} = \sigma\otimes\Id_\mathcal{V}:\pi^*\W^+\otimes \mathcal{V}\rightarrow \pi^*\mathcal{W}^-\otimes \mathcal{V}$.  Using the multiplicativity of the Chern character \cite{PV0,PV1}, we have $\Ch(\sigma_{\mathcal{V}}) = \Ch(\sigma)\Ch(\mathcal{V})$.  Since $\Ch(\mathcal{V})$ is a form on $M$, we obtain the following extension to \ref{main}:
\begin{proposition}The $H$-equivariant index of $\sigma_{\mathcal{V}}$ is given, for $X\in\mathfrak{h}$ sufficiently small, by\label{sigF}
\begin{equation}
\ind^H(\sigma_{\mathcal{V}})(e^X) = \frac{1}{(2\pi i)^{\rank E}}\int_M \hat{A}^2(M,X)\mathcal{J}(E,X)\Ch(\mathcal{V})(X)s_*\Ch(\sigma_E)(X),
\end{equation}
with similar formulas near other elements $h\in H$.
\end{proposition}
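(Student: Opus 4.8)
The plan is to deduce Proposition \ref{sigF} from the main index formula \eqref{main} by recognizing $\sigma_{\mathcal{V}}$ itself as the pullback under $r$ of an elliptic symbol on $E^*$, and then isolating the contribution of $\mathcal{V}$ through multiplicativity of the Chern character. Since $\pi = s\circ r$, we have $\pi^*\mathcal{V} = r^*s^*\mathcal{V}$, and therefore
\[
\sigma_{\mathcal{V}} = \sigma\otimes\Id_{\pi^*\mathcal{V}} = r^*\sigma_E\otimes r^*\Id_{s^*\mathcal{V}} = r^*\bigl(\sigma_E\otimes\Id_{s^*\mathcal{V}}\bigr).
\]
Writing $(\sigma_E)_{\mathcal{V}} := \sigma_E\otimes\Id_{s^*\mathcal{V}}$, this is a symbol on $E^*$; as $\sigma_E$ is elliptic and tensoring by the identity on a vector bundle preserves invertibility off the zero section, $(\sigma_E)_{\mathcal{V}}$ is again elliptic on $E^*$. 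Hence $\sigma_{\mathcal{V}} = r^*(\sigma_E)_{\mathcal{V}}$ fits the hypotheses of the main theorem, and \eqref{main} holds verbatim with $\sigma_E$ replaced by $(\sigma_E)_{\mathcal{V}}$.

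Next I would apply multiplicativity of the equivariant Chern character with support \cite{PV0,PV1} to factor
\[
\Ch\bigl((\sigma_E)_{\mathcal{V}},h\bigr)(Y) = \Ch(\sigma_E,h)(Y)\,s^*\Ch(\mathcal{V})(Y),
\]
using $\Ch(s^*\mathcal{V}) = s^*\Ch(\mathcal{V})$, the pullback of a smooth equivariant form on $M(h)$. Integrating along the fibres of $s$ and applying the projection formula $s_*(\eta\wedge s^*\omega) = (s_*\eta)\wedge\omega$ then gives
\[
s_*\Ch\bigl((\sigma_E)_{\mathcal{V}},h\bigr)(Y) = \Ch(\mathcal{V})(Y)\,s_*\Ch(\sigma_E,h)(Y).
\]
Substituting this into \eqref{main} produces the twisted formula near an arbitrary $h$; specializing to $h=1$, where $M(h)=M$, $E(h)=E$, and $\mathcal{N}=0$ so that $D_h(\mathcal{N},Y)=1$, turns the prefactor $(2\pi i)^{-\rank E(h)}$ into the constant $(2\pi i)^{-\rank E}$ that may be pulled outside the integral, yielding the displayed formula.

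The only step requiring genuine care is the projection formula in the generalized-coefficient setting: one must verify that $\Ch(\sigma_E,h)$ is represented by a form that is compactly supported along the fibres of $s:E^*\to M(h)$ (indeed its support is the zero section, where $\sigma_E$ fails to be invertible), so that the fibre integral $s_*$ is defined and commutes with wedging by the pulled-back smooth form $s^*\Ch(\mathcal{V})$. This is precisely the content of the Chern-character-with-support formalism already invoked in \eqref{main}, so I expect this to be bookkeeping rather than a substantive obstacle; the essential mathematical input is simply the multiplicativity $\Ch(\sigma_{\mathcal{V}}) = \Ch(\sigma)\Ch(\mathcal{V})$.
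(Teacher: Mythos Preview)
Your proposal is correct and uses essentially the same idea as the paper: the multiplicativity $\Ch(\sigma_{\mathcal{V}}) = \Ch(\sigma)\Ch(\mathcal{V})$, together with the fact that $\Ch(\mathcal{V})$ lives on $M$ and therefore passes through the fibre integrations. The only organizational difference is that the paper applies multiplicativity directly at the level of $\sigma$ on $T^*M$ and then invokes \eqref{main}, whereas you first recast $\sigma_{\mathcal{V}}$ as $r^*$ of the elliptic symbol $(\sigma_E)_{\mathcal{V}}$ on $E^*$ and feed that into the main theorem; both routes amount to the same computation, and your version makes the use of the projection formula for $s_*$ pleasantly explicit.
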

\end{remark}

\subsection{Almost-CR structures}\label{comp}
We now wish to consider the special case where the sub-bundle $E$ carries a complex structure.  In these cases if we define our bundle $\W$ in terms of the bundle of differential forms on $M$, and make specific choices for the connection on $TM$ and superconnection on $\W$, the fibre integral of the Chern character can be computed explicitly, giving us a formula involving only $\mathcal{J}(E,X)$ and equivariant characteristic classes on $M$.

Let $\rank E = 2m$ (so that $\dim(M) = 2m+k$), and suppose that $M$ is equipped with an $H$-invariant almost-CR structure $E\otimes\C = E_{1,0}\oplus E_{0,1}$. Using the same approach as in \cite{F}, we let $E^*\otimes\C = E^{1,0}\oplus E^{0,1}$ denote the decomposition of $E^*\otimes\C$ induced by the almost-CR structure, and take $\W^\pm = \bigwedge^{even/odd} E^{0,1}$ as our Hermitian vector bundles.  The bundle $\W$ becomes a spinor module for the Clifford multiplication  $\mathbf{c}: E\rightarrow \End_\C(\W)$,  and we may take our symbol 
\begin{equation*}
\sigma:\pi^*\W^+\rightarrow \pi^*\W^-
\end{equation*}
to be $\sigma(x,\xi) = \mathbf{c}(r(\xi))_x$, which has support equal to $E^0$.

We equip $E_{1,0}$ with an $H$-invariant Hermitian metric, and $H$-invariant Hermitian connection $\nabla$.  Letting $\nabla^\W$ denote the connection on $\W$ induced by $\nabla$, we consider Quillen's superconnection \cite{Q}
\begin{equation}
\mathbb{A}(\sigma) = \pi^*\nabla^\W +iv_\sigma
\end{equation}
on $\pi^*\W$, where $v_\sigma = \begin{pmatrix} 0 & \sigma^*\\ \sigma & 0\end{pmatrix}$.  Here, $\sigma^*$ is defined using the Hermitian metric, so that $v_\sigma^2(x,\xi) = -||p_1(\xi)||_x^2 \Id$.

Let $\mathbb{F}(\mathbb{A}(\sigma))(X)$ denote the equivariant curvature of $\mathbb{A}(\sigma)$, and define the equivariant Chern character
\begin{equation*}
\Ch(\mathbb{A}(\sigma)) = \Str(e^{\mathbb{F}(\mathbb{A}(\sigma))}),
\end{equation*}
where $\Str$ denotes the supertrace \cite{BGV,Q}.
By \cite[Proposition 6.13]{PV1}, for any $X\in \mathfrak{h}$, we have
\begin{equation}
\Ch(\mathbb{A}(\sigma))(X) = (2\pi i)^{2m}\Td(E^{1,0},X)^{-1}\Th_{MQ}(E^*,X),\label{thom}
\end{equation}
where $\Th_{MQ}(E^*,X)$ is an equivariant Thom form on $E^*$ defined similarly to the ``gaussian-shaped'' Thom form in \cite{MQ}.

Similarly, let $j:T^*M(h)\rightarrow T^*M$ denote inclusion of the $h$-fixed points.  The Chern character on $T^*M(h)$ is given as in \cite{BV1} by
\begin{equation*}
\Ch_h(\mathbb{A}(\sigma)) = \Str(h\cdot j^*e^{\mathbb{F}(\mathbb{A}(\sigma))}).
\end{equation*}

The class $\Ch_{sup}(\sigma, h)$ can be represented in calculations by the form
\begin{equation}
c(\mathbb{A},\sigma,\chi) = \chi\Ch_h(\mathbb{A},\sigma) + d\chi \beta(\mathbb{A},\sigma),
\end{equation}
where $\chi$ is a smooth cutoff function supported in a sufficiently small neighbourhood of $\supp(\sigma)$ \cite{PV1,PV3}.  Moreover, the integrals of $\Ch_h(\mathbb{A}(\sigma))$ and $\Ch(\sigma, h)$ over the fibres of $E^*(h)$ coincide in $\mathcal{H}^\infty(\mathfrak{h}(h),M(h))$.

We now consider the restrictions of the subbundles $E^*$ and $E^0$ to $M(h)$.  Let $\mathcal{N}^*_0 = \mathcal{N}^*\cap(E^0|_{M(h)})$ and let $\mathcal{N}^*_E = \mathcal{N}^*\cap(E^*|_{M(h)})$. Let $\nabla = \nabla_h\oplus \nabla_1$ denote the decomposition of $\nabla$ on $E^*|_{M(h)} = E^*(h)\oplus \mathcal{N}^*_E$.  If $\nabla_h^{\W}$ denotes the connection on $\W(h)$ induced by $\nabla_h$, then when pulled back to $T^*M(h)$, the superconnection $\mathbb{A}(\sigma)$ decomposes according to $j^*\mathbb{A}(\sigma) = \mathbb{A}_h(\sigma)\oplus \pi^*\nabla^\W_1$, where 
$\mathbb{A}_h(\sigma) = \pi^*\nabla_h^{\W} + iv_\sigma|_{T^*M(h)}$.

Since the action of $h$ on $\W(h)$ is trivial, we see that
\begin{equation*}
\Ch_h(\mathbb{A}(\sigma))(X) = \Ch(\mathbb{A}_h(\sigma))(X)\Str(h\cdot e^{\pi^*F(\nabla_1)(X)}),
\end{equation*}
where $F(\nabla_1)(X)$ denotes the equivariant curvature of $\nabla_1$.

It was shown in \cite{F} that 
\begin{equation*}
\Str(h\cdot e^{\pi^* F(\nabla_1)(X)}) = \det\nolimits_\C(1-h\cdot e^{-\pi^*F(\nabla_1)(X)}).
\end{equation*}

The bundle $\mathcal{N}_E$ carries a complex structure induced from that on $E$.  Let $D_h^\C(\mathcal{N}_E)$ denote the form on $M$ defined as above but using the complex determinant in place of the real determinant.  Let $\overline{\mathcal{N}}$ denote the complex conjugate of $\mathcal{N}$, and note that the Hermitian metric allows the identification $\overline{\mathcal{N}} = \mathcal{N}^*$.  Choosing a splitting $\mathcal{N} = \mathcal{N}_0\oplus \mathcal{N}_E$, we can write 
\begin{equation*}
D_h(\mathcal{N}) = D_h(\mathcal{N}_0)D_h^\C(\mathcal{N}_E)D_h^\C(\overline{\mathcal{N}_E}),
\end{equation*}
while 
\begin{equation*}
\det\nolimits_\C(1-h\cdot e^{-\pi^*F(\nabla_1)(X)}) = \pi^*D_h^\C(\mathcal{N}_E^*)=\pi^*D_h^\C(\overline{\mathcal{N}_E}).
\end{equation*}
Finally, we apply the decomposition (\ref{thom}) to the Chern character $\Ch(\mathbb{A}_h(\sigma))$ on $E^*(h)$, noting as well that
\begin{equation*}
\hat{A}^2(M(h)) = \hat{A}^2(E(h))\hat{A}^2(TM(h)/E(h)) = \Td(E(h))\Td(E^*(h))\hat{A}^2(E^0(h)).
\end{equation*}
We substitute all of the above into the formula (\ref{main}), and note that $s_*\Th_{MQ}(E^*,X) = 1$ to obtain

\begin{theorem}\label{rrf}
Suppose that a compact Lie group $H$ acts on $M$ transverse to $E\subset TM$, and suppose that $M$ is equipped with an $H$-invariant almost-CR structure $E\otimes\C = E_{1,0}\oplus E_{0,1}$.  Then we have
\begin{equation}
\ind^H(\sigma)(h e^Y)  =
\int_{M(h)} (2\pi i)^{-\rank E(h)/2}\frac{\Td(E(h),Y)}{D_h^\C(\mathcal{N}_E,Y)}\frac{\hat{A}^2(E^0(h),Y)}{D_h(\mathcal{N}_0,Y)}\mathcal{J}(E(h),Y). 
\end{equation}
\end{theorem}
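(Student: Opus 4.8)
The plan is to start from the general transversally-elliptic index formula (\ref{main}) and substitute into it the explicit computation of the fibre-integrated Chern character $s_*\Ch(\sigma_E,h)$ afforded by the almost-CR structure, then simplify the resulting integrand by exploiting two cancellations. Since every ingredient has been assembled in the paragraphs preceding the statement, the proof is essentially a bookkeeping exercise, and I would organise it so that the cancellations become visible \emph{before} any powers of $(2\pi i)$ are collected.

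First I would replace $\Ch(\sigma_E,h)$ in (\ref{main}) by the representative built from Quillen's superconnection, which is legitimate because the fibre integrals of $\Ch_h(\mathbb{A}(\sigma))$ and $\Ch(\sigma,h)$ over $E^*(h)$ agree in $\mathcal{H}^\infty(\mathfrak{h}(h),M(h))$. Over the fixed-point locus the superconnection splits as $j^*\mathbb{A}(\sigma) = \mathbb{A}_h(\sigma)\oplus\pi^*\nabla_1^\W$, and since $h$ acts trivially on $\W(h)$ this gives the factorisation $\Ch_h(\mathbb{A}(\sigma))(X) = \Ch(\mathbb{A}_h(\sigma))(X)\,\Str(h\,e^{\pi^*F(\nabla_1)(X)})$. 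The normal factor is pulled back from $M(h)$, so it comes out of the fibre integral $s_*$; evaluating it through the determinant identity from \cite{F} gives $D_h^\C(\overline{\mathcal{N}_E},X)$. For the tangential factor I would apply the Thom-form decomposition (\ref{thom}) on $E^*(h)$, writing $\Ch(\mathbb{A}_h(\sigma))$ as a power of $(2\pi i)$ times $\Td(E^{1,0}(h),X)^{-1}\Th_{MQ}(E^*(h),X)$, and then integrate over the fibre of $s$ using $s_*\Th_{MQ}(E^*(h),X)=1$. This expresses $s_*\Ch(\sigma_E,h)(X)$ purely in terms of $\Td(E^{1,0}(h),X)^{-1}$, $D_h^\C(\overline{\mathcal{N}_E},X)$, and an overall constant.

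The heart of the argument is the simplification of the integrand after substitution. Using the factorisation $\hat{A}^2(M(h)) = \Td(E(h))\,\Td(E^*(h))\,\hat{A}^2(E^0(h))$, the Todd factor $\Td(E^*(h))$ cancels against $\Td(E^{1,0}(h))^{-1}$ under the identification of $E^{1,0}(h)$ with the complex bundle $E^*(h)$, leaving exactly the $\Td(E(h),Y)$ of the statement. Simultaneously, the factorisation $D_h(\mathcal{N}) = D_h(\mathcal{N}_0)\,D_h^\C(\mathcal{N}_E)\,D_h^\C(\overline{\mathcal{N}_E})$ means the numerator $D_h^\C(\overline{\mathcal{N}_E})$ cancels one factor of the denominator, leaving $D_h(\mathcal{N}_0,Y)\,D_h^\C(\mathcal{N}_E,Y)$. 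Throughout, $\mathcal{J}(E(h),Y)$, the only factor with generalized coefficients, is already a form on $M(h)$ and rides along untouched, so all products and the fibre integral remain well defined; Proposition \ref{propa} and the transversality of the $H(h)$-action on $M(h)$ (Proposition \ref{fix}) are what guarantee this.

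The step I expect to be the main obstacle is not any single identity but the consistent bookkeeping across the fixed-point locus: verifying that the superconnection splitting really yields $D_h^\C(\overline{\mathcal{N}_E})$ and that this is \emph{precisely} the factor occurring in the real determinant $D_h(\mathcal{N})$, so that the cancellation is exact rather than merely cohomological, together with the matching of $\Td(E^{1,0}(h))$ to $\Td(E^*(h))$ and the collection of all powers of $(2\pi i)$ — coming from (\ref{main}), from (\ref{thom}), and from the various dimension-dependent normalisations — into the single exponent $-\rank E(h)/2$ of the stated formula.
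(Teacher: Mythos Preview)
Your proposal is correct and follows essentially the same route as the paper: both start from the general formula (\ref{main}), compute $s_*\Ch(\sigma_E,h)$ via the Quillen superconnection splitting over $M(h)$ together with the Thom-form identity (\ref{thom}), and then effect the two cancellations --- $\Td(E^*(h))$ against $\Td(E^{1,0}(h))^{-1}$ from the $\hat{A}^2$ factorisation, and $D_h^\C(\overline{\mathcal{N}_E})$ against the corresponding factor of $D_h(\mathcal{N})$ --- to arrive at the stated integrand. The only difference is organisational: the paper assembles all the identities in the discussion preceding the theorem and then substitutes in one step, whereas you make the cancellations explicit before collecting constants; the content is the same.
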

\section{Examples}
\subsection{Contact manifolds}
In \cite{F}, the case of a compact, co-oriented contact manifold $(M,E)$, upon which a Lie group $G$ acts transverse to $E$, was considered.  Here, the anihilator $E^0$ is trivial, and a global non-vanishing contact form $\alpha\in\Gamma(E^0)$ may be chosen.

Since $d\alpha|_E$ is symplectic, we can choose a compatible $G$-invariant complex structure $J$ on $E$, putting us in the setting of Section \ref{comp}, with the added simplification of the trivialization $E^0 = M\times \R$ defined by the choice of contact form.

The index formula of Theorem \ref{rrf} is then
\begin{equation}
\ind^G(\sigma)(he^Y) = \int_{M(h)}(2\pi i)^{-k(h)}\frac{\Td(E(h),Y)}{D^{\C}_h(\mathcal{N},Y)}\mathcal{J}(E(h),Y),
\end{equation}
where $2k(h)+1 = \dim M(h)$.

In the contact case, we may write $\mathcal{J}(E(h),Y) = \alpha_h\delta_0(D\alpha_h(Y))$, where $\alpha_h = \alpha|_{M(h)}$.  The properties of $\mathcal{J}(E(h),Y)$ ensure that it is independent of the choice of contact form $\alpha$.

\begin{remark}
We note that in the special case of a Sasakian manifold, the Reeb vector field associated to our choice of contact form is Killing.  Thus, the group of isometries automatically provides us with an action transverse to the contact distribution.
\end{remark}

\subsection{Almost contact manifolds}
The proof given in \cite{F} relies on the additional structure one has on a contact manifold.  In particular, it makes use of both the symplectic structure on the contact distrubution, and the trvialization of the anihilator line bundle.
Since our proof dispenses with these assumptions, we can apply our index formula in a number of more general settings, such as that of an almost-contact manifold.

We say $(M,\varphi,\xi,\eta)$ is an almost contact manifold if  $\eta\in \mathcal{A}^1(M)$ and $\xi\in \Gamma(TM)$ satisfy $\eta(\xi)=1$, and $\varphi\in \Gamma(\End(TM))$ satisfies $\varphi^2 = -\Id + \eta\otimes\xi$.

It is shown in \cite{Blair} that one can always find a compatible metric $g$ such that $g(\varphi X,\varphi Y) = g(X,Y) - \eta(X)\eta(Y)$.  If a Lie group $G$ acts transverse to $E = \ker(\eta)$ and preserving the tensors $\varphi$, $\xi$ and $\eta$, the results of Section \ref{comp} apply, since the $\pm i$-eigenbundles of $\varphi|_E$ define an almost-CR structure.

Let $\nabla$ denote the Levi-Civita connection with respect to $g$. If $(\nabla_X\phi)X = 0$, $M$ is called a {\em nearly cosymplectic} manifold, and it follows that $\xi$ is Killing \cite{Blair}, and thus that the group of isometries of $(M,g)$ acts transverse to $E$.
\begin{remark}
If one defines a two-form $\Phi$ by $\Phi(X,Y) = g(X,\varphi Y)$, then $\eta\wedge\Phi^n$ is a volume form on $M$ (this is an alternative definition of an almost-contact structure).  One could extend this notion to distributions of higher corank, by supposing the existence of 1-forms $\eta_1,\ldots, \eta_k$ and a 2-form $\Phi$ on a manifold of dimension $2n+k$, such that $\eta_1\wedge\cdots\wedge\eta_k\wedge\Phi^n\neq 0$; the formula of Theorem \ref{rrf} would still apply, provided that the forms $\eta_i$ were $H$-invariant, and not contained in $T^*_HM$.
\end{remark}
\subsection{(Locally) free actions}\label{vergnecomp}
As mentioned in Remark \ref{lfa} above, the case where $\rank E^0 = \dim H$ corresponds to the simpler case of a locally free action.  Here, the natural choice for our distribution $E$ is the space of horizontal vectors with respect to some choice of connection on $M$, and the anihilator $E^0$ becomes the trivial bundle $E^0 = M\times\mathfrak{h}^*$.  Using the notation of Remark \ref{cwf}, we have the global expression
\begin{equation*}
\mathcal{J}(E,X) = \psi_r\cdots\psi_1\delta_0(D\psi(X)) = \psi_r\cdots\psi_1\delta_0(X-\Psi).
\end{equation*}
Let $\pi:M\rightarrow B=M/G$ denote the quotient mapping.  Let $j_{\mathfrak{h}}(X)$ denote the function
\begin{equation*}
j_\mathfrak{h}(X) = \det\nolimits_{\mathfrak{h}}\frac{e^{\ad X/2} - e^{-\ad X/2}}{\ad X}.
\end{equation*}
We have the Schur orthogonality formula \cite{PV3} 
\begin{equation*}
\delta_0(X-\Psi) = j_\mathfrak{h}(X)\sum_{\tau\in\hat{H}}\Tr \tau(e^X) \Tr \tau^*(e^\Psi),
\end{equation*}
and the identity $\hat{A}^2(M,X) = j_\mathfrak{h}(X)\pi^*\hat{A}^2(B)$.  Since $\sigma_E$ is defined on $E^* = \pi^*T^*B$, $s_*\Ch(\sigma_E)$ is the pull-back of a form on $B$.
From \cite{V}, we have the formula
\begin{equation*}
\int_{M/H}\alpha = \int_M |S|\alpha\wedge v_\mathfrak{h},
\end{equation*}
where $|S|$ is the order of the generic stabilizer, and $v_\mathfrak{h} = \vol(H)\psi_r\cdots\psi_1$.

Combining the above, when the $H$-action is locally free, we obtain the expansion
\begin{align*}
\ind^H(\sigma)(e^X) &= \frac{1}{(2\pi i)^{\rank E}}\int_M \hat{A}^2(M,X)s_*\Ch(\sigma_E)(X)\mathcal{J}(E,X)\\
&= \sum_{\tau\in\hat{H}}\Tr \tau(e^X)\frac{1}{(2\pi i)^{\dim M/H}}\int_{M/H}\hat{A}^2(M/H)|S|^{-1}s_*\Ch(\sigma_E)\Tr \tau^*(e^\Psi),
\end{align*}
with similar formulas near other elements of $H$.  By \cite{V}, the expression 
\begin{equation*}
\frac{1}{(2\pi i)^{\dim M/H}}\int_{M/H}\hat{A}^2(M/H)|S|^{-1}s_*\Ch(\sigma_E)
\end{equation*}
defines the index of $\sigma_E$ on the orbifold $M/H$.

\subsection{Induced representations}
We now consider the following setting: Suppose $G$ is a compact semi-simple Lie group, and $H$ is a closed subgroup of $G$.  We let $M=G/H$, on which $G$ acts transitively.  Suppose $\tau:H\rightarrow \End(V)$ is a finite-dimensional, irreducible unitary representation of $H$.  Denote by $\mathcal{V}_\tau = G\times_\tau V$ the corresponding vector bundle over $M$.
One may then define the induced representation $\inl2_H^G(\tau)$
of $G$ on the $L^2$-sections of $\mathcal{V}_\tau\rightarrow M$ \cite{Kn}.
The character of this representation is a generalized function on $G$.  Berline and Vergne \cite{BV0} gave a formula for this character as an equivariant index, as follows:

Since $G$ acts transitively on $M$, every differential operator is transversally elliptic, including the zero operator
\begin{equation*}
0_\tau:\Gamma_{L^2}(\mathcal{V}_\tau)\rightarrow 0.
\end{equation*}
Its index, in terms of the Berline-Vergne index formula \cite{BV1,BV2}, is given near $g\in G$ by
\begin{equation}\label{bvind}
\ind^G(0_\tau)(ge^X) = \int_{T^*M(g)}(2\pi i)^{-\dim M(g)}\frac{\hat{A}^2(M(g),X)}{D_g(\mathcal{N},X)}\Ch_g(\mathcal{V}_\tau,X)e^{iD\theta^g(X)},
\end{equation}
where $\theta$ is the canonical 1-form on $T^*M$, and $\theta^g$ denotes its restriction to $T^*M(g)$.

The main result of \cite{BV0} is the identity
\begin{equation}\label{bvchar}
\chi(\inl2^G_H(\tau))(g) = \ind^G(0_\tau)(g).
\end{equation}
Now suppose that $M=G/H$ is Hermitian; that is, we suppose that $M$ is a complex manifold.  We may equivalently write $M = G^\C/P$, where $G^\C$ denotes the complexification of $G$, and $P$ is a parabolic subgroup \cite{Kn}.  (We may, for example, take $H$ to be a maximal torus $T$.)

As shown by Bott \cite{Bott}, if $\tau:H\rightarrow \End(V)$ is a holomorphic representation of $H$ on a finite-dimensional complex vector space $V$, then $\mathcal{V}_\tau = G\times_\tau V$ is a holomorphic vector bundle over $M=G/H$.  In this setting we may define the ``holomorphic induced representation'' of $G$ on the holomorphic sections of $\mathcal{V}_\tau$, which we'll denote by $\hol^G_H(\tau)$, following \cite{Kn}.

The $G$-action on $\mathcal{V_\tau}$ induces a $G$-module structure on the cohomology spaces $H^q(M, \mathcal{O}(\mathcal{V}_\tau))$, where $\mathcal{O}(\mathcal{V}_\tau)$ denotes the sheaf of holomorphic sections of $\mathcal{V}_\tau$.
Bott showed that if $\tau$ is irreducible, then the above cohomolgy spaces vanish in all but one degree, 
and that the non-vanishing space $H^p(M,\mathcal{O}(\mathcal{V}_\tau))$ is an irreducible $G$-representation.
The character of this representation can be computed using the equivariant Riemann-Roch theorem on the complex manifold $M$.  If $\sigma$ denotes the symbol of the Dolbeault-Dirac operator on sections of $\W = \bigwedge T^{0,1}M$ then the character of $\hol^G_H(\tau)$ is the index of the symbol $\sigma_\tau = \sigma\otimes \Id$ on $\pi^*(\W\otimes\mathcal{V}_\tau)$:
\begin{equation*}
\chi(\hol^G_H(\tau))(ge^X) = \ind^G(\sigma_\tau)(ge^X) = \int_{M(g)} (2\pi i)^{-\dim M(g)/2}\frac{\Td(TM(g),X)}{D_g^\C(\mathcal{N},X)}\Ch_g(\mathcal{V}_\tau,X).
\end{equation*}

The two cases given above represent two extremes of transversally elliptic symbols: the zero symbol in the first case, whose support is all of $T^*M$, and an elliptic symbol in the second, whose support is the zero section.  We may also consider the following intermediate possibility: we suppose there exists a $G$-invariant sub-bundle $E\subset TM$, and a symbol whose support is $E^0\subset T^*M$.

If $G/H$ is Hermitian, then $TM = G\times_H\mathfrak{h}^\bot$, and $\mathfrak{h}^\bot$ is a complex vector space.  We choose some complex, $H$-invariant subspace $W$ of $\mathfrak{h}^\bot$, and let $E=G\times_HW\subset TM$.  For example, if $H$ is a maximal torus, then we may take $W$ to be a sum of root spaces.

We are now in the setting of Section \ref{comp} above: $E\subset TM$ is $G$-invariant and equipped with a complex structure.  Since $G$ acts transitively on $M$, the action is automatically transverse to $E$. We let $\W = \bigwedge E^{0,1}$. Suppose $\tau:H\rightarrow \End(V)$ is a finite-dimensional unitary irreducible $H$-representation, and let $\mathcal{V}_\tau = G\times_\tau V$.  If we consider the symbol $\sigma_\tau$ on $\pi^*(\W\otimes\mathcal{V}_\tau)$, then Propositions \ref{sigF} and \ref{rrf} give
\begin{equation}
\ind^G(\sigma_\tau)(ge^X) = \int_{M(g)} (2\pi i)^{-\rank E/2} \frac{\Td(E(g),X)}{D_g^\C(\mathcal{N}_E,X)}\frac{\hat{A}^2(E^0(g),X)}{D_g(\mathcal{N}_0,X)}\mathcal{J}(E(g),X)\Ch_g(\mathcal{V}_\tau,X).\label{indrep}
\end{equation}
As special cases, we have
\begin{enumerate}
\item $E=0$: This is the case of the zero operator on sections of $\mathcal{V}_\tau$.  We have $E^0 = T^*M$, $\mathcal{N}_0 = \mathcal{N}$, and $\mathcal{N}_E = \{0\}$.  If we let $\displaystyle \mathcal{J}(M,X) = (2\pi i)^{-\dim M}\pi_*e^{iD\theta(X)}$ denote the form corresponding to the zero section, then (\ref{indrep}) becomes
\begin{align*}
\ind^G(0)(ge^X) & =\int_{M(g)} \frac{\hat{A}^2(M(g),X)}{D_g(\mathcal{N},X)}\Ch_g(\mathcal{V}_\tau,X)\mathcal{J}(M(g),X)\\
& =\int_{T^*M(g)}(2\pi i)^{-\dim M(g)}\frac{\hat{A}^2(M(g),X)}{D_g(\mathcal{N},X)}\Ch_g(\mathcal{V}_\tau,X)e^{iD\theta^g(X)}\\
& = \chi(\inl2^G_H(\tau))(ge^X),
\end{align*}
by the Berline-Vergne character formula (\ref{bvchar}).
\item $E=TM$: This is the case of the Dolbeault-Dirac operator on sections of $\bigwedge T^{0,1}M$, twisted by the bundle $\mathcal{V}_\tau$.  In this case we have $E^0=0$, and so $\mathcal{N}_0 = \{0\}$, $\mathcal{N}_E = \mathcal{N}$, and $\mathcal{J}(E,X)=1$, and thus (\ref{indrep}) becomes
\begin{equation*}
\ind^G(\sigma_\tau)(ge^X) = \int_{M(g)}(2\pi i)^{-\dim M(g)/2} \frac{\Td(TM(g),X)}{D_h^\C(\mathcal{N},X)}\Ch_g(\mathcal{V}_\tau,X),
\end{equation*}
and we recover the Riemann-Roch formula for the character of the holomorphic induced representation.
\end{enumerate}
\section*{Acknowledgements}
The author would like to thank Eckhard Meinrenken for many fruitful 
discussions during which many of the ideas devloped above were first 
suggested, as well as much helpful feedback during the preparation of 
this article.  The local form $\mathcal{J}_{\balpha}(E,X)$ as the 
natural extension of the corresponding form appearing in \cite{F}, and its
relation to Chern-Weil forms were both observations of Meinrenken.

\bibliographystyle{plain}
\bibliography{index}

\end{document}